\theoremstyle{plein}
\newtheorem{theorem}{Theorem}[section]
\newtheorem*{theorem*}{Theorem}
\newtheorem{lemma}[theorem]{Lemma}
\newtheorem{proposition}[theorem]{Proposition}
\newtheorem*{proposition*}{Proposition}
\newtheorem{corollary}[theorem]{Corollary}
\newtheorem*{corollary*}{Corollary}
\newtheorem{conjecture*}{Conjecture}
\theoremstyle{definition}
\newtheorem{example}[theorem]{Example}
\newtheorem{definition}[theorem]{Definition}
\newtheorem{definition*}{Definition}
\newtheorem{remark}[theorem]{Remark}
\newtheorem{remark*}{Remark}
\newtheorem{notation}[theorem]{Notation}
\newtheorem{recollection}{Recollection}[section]
\newtheorem{construction}[theorem]{Construction}
\newcommand{\bA}{{\mathbb A}}
\newcommand{\bC}{{\mathbb C}}
\newcommand{\bE}{{\mathbb E}}
\newcommand{\bG}{{\mathbb G}}
\newcommand{\ZZ}{{\mathbb Z}}
\newcommand{\mA}{{\mathcal A}}
\newcommand{\mB}{{\mathcal B}}
\newcommand{\mC}{{\mathcal C}}
\newcommand{\mD}{{\mathcal D}}
\newcommand{\mE}{{\mathcal E}}
\newcommand{\mJ}{{\mathcal J}}
\newcommand{\mK}{{\mathcal K}}
\newcommand{\mS}{{\mathcal S}}
\newcommand{\mU}{{\mathcal U}}
\newcommand{\mX}{{\mathcal X}}
\newcommand{\A}{{\mathrm A}}
\newcommand{\B}{{\mathrm B}}
\newcommand{\C}{{\mathrm C}}
\newcommand{\E}{{\mathrm E}}
\newcommand{\F}{{\mathrm F}}
\newcommand{\G}{{\mathrm G}}
\newcommand{\rH}{{\mathrm H}}
\renewcommand{\L}{{\mathrm L}}
\newcommand{\M}{{\mathrm M}}
\newcommand{\R}{{\mathrm R}}
\newcommand{\rS}{{\mathrm S}}
\newcommand{\V}{{\mathrm V}}
\newcommand{\X}{{\mathrm X}}
\newcommand{\Y}{{\mathrm Y}}
\newcommand{\Z}{{\mathrm Z}}
\newcommand{\rc}{{\mathrm c}}
\newcommand{\bj}{{\mathrm j}}
\newcommand{\bi}{{\mathrm i}}
\newcommand{\m}{{\mathrm m}}
\newcommand{\bk}{{\mathrm k}}
\newcommand{\g}{{\mathrm g}}
\newcommand{\n}{{\mathrm n}}
\newcommand{\op}{\mathrm{op}}
\newcommand{\dual}{\vee}
\newcommand{\MGL}{\mathsf{MGL}}
\newcommand{\SH}{\mathsf{SH}}
\newcommand{\KGL}{\mathsf{KGL}}
\newcommand{\Sm}{\mathrm{Sm}}
\newcommand{\Sp}{\mathrm{Sp}}
\newcommand{\y}{\mathsf{y}}
\newcommand{\f}{\mathsf{f}}
\newcommand{\colim}{\mathrm{colim}}
\newcommand{\Mod}{{\mathrm{Mod}}}
\newcommand{\RMod}{{\mathrm{RMod}}}
\newcommand{\ot}{\otimes}
\newcommand{\Pre}{\mathrm{Pre}}
\renewcommand{\smash}{\wedge}
\newcommand{\id}{\mathrm{id}}
\newcommand{\Cat}{\mathsf{Cat}}
\newcommand{\Pic}{\mathrm{Pic}}
\newcommand{\Alg}{\mathrm{Alg}}
\newcommand{\Fun}{\mathrm{Fun}}
\newcommand{\tu}{{\mathbb 1}}
\newcommand{\Op}{{\mathrm{Op}}}
\newcommand{\lax}{{\mathrm{lax}}}
\newcommand{\Mul}{{\mathrm{Mul}}}
\newcommand{\lan}{{\mathrm{lan}}}
\newcommand{\ev}{{\mathrm{ev}}}
\newcommand{\Ind}{{\mathrm{Ind}}}
\newcommand{\Cell}{\mathrm{Cell}}
\newcommand{\Rig}{\mathrm{Rig}}
\begin{document}
	
\author{Hadrian Heine}
 
\title{A topological model for cellular motivic spectra} 

\maketitle 

\begin{abstract}

For any motivic $\bE_\infty$-ring spectrum $A$ we construct an equivalence
$\rho$ between the $\infty$-category of cellular motivic $A$-module spectra and modules over an $\bE_1$-algebra $\Theta$ in $\ZZ $-graded spectra,
under which the motivic grading corresponds to the $\ZZ$-grading.
If the base is $\bC$ or if $A$ admits an $\bE_\infty$-orientation,
we refine the $\bE_1$-algebra $\Theta$ to an $\bE_\infty$-algebra
and $\rho$ to a symmetric monoidal equivalence.

To capture the symmetric monoidal structure in the general situation, we lift $\rho$ to a symmetric monoidal equivalence to modules over an $\bE_\infty$-algebra in $\mJ $-graded spectra that invert morphisms of $\mJ$, where $\mJ$ is the diagram category 
of Sagave-Schlichtkrull \cite{Sagave2011DiagramSA}, a model for Quillen's localization of the groupoid of finite sets and bijections.

\end{abstract}

\tableofcontents

\section{Introduction}

Stable motivic homotopy theory has been at the heart of a recent breakthrough in
the computation of stable homotopy groups of spheres \cite{Isaksen2014StableS} \cite{unknown}.
Instead of identifying the Adams spectral sequence of the sphere, the main tool to compute stable homotopy groups, one identifies the $\bC$-motivic analogue of the
Adam's spectral sequence that realizes to the classical Adam's spectral sequence and can be usefully related \cite[Theorem 6.1]{Isaksen2014StableS} to the algebraic Novikov spectral sequence \cite{miller}, which is computable by machine.

As a consequence of these achievements the question arose if the computations of
stable homotopy groups \cite{Isaksen2014StableS}\cite{unknown},
which use deep results of motivic homotopy theory like Voevodsky's computations of
the motivic Steenrod algebra and motivic cohomology of a point, necessarily depend on motivic homotopy theory.

The work of \cite{mmf} gives an answer to this question: the authors prove that the computations of \cite{Isaksen2014StableS}\cite{unknown} are independent
from motivic homotopy theory by presenting the homotopy theory of 2-complete complex cellular motivic spectra by purely topological data:
they construct a lax symmetric monoidal functor $\Xi_*: \Sp \to \Sp^\mathrm{fil}$ from spectra to filtered spectra and model 2-complete complex cellular motivic spectra as $\Xi_*(S^0)$-modules in filtered spectra \cite[Theorem 6.12]{mmf}.
For every $n \in \ZZ$ the spectrum $\Xi_n(S^0)$ is identified with the spectrum of global sections of the motivic spectrum $\rS^{0,-n}$ \cite[Proposition 6.8]{mmf},
where $S^{p,q}:= S^{p} \smash (\mathbb{G}_m[-1])^{\smash q}$ for $p,q \in \ZZ$
is the bigraded motivic sphere.

The authors use this presentation of 2-complete complex cellular motivic spectra to construct motivic analogues of classical spectra of importance.

Applying the functor $\Xi_*$ to a spectrum $X$ gives a $\Xi_*(S^0)$-module $\Xi_*(X)$ modeling a 2-complete complex cellular motivic spectrum, which serves as a motivic analogue of $X$ according to the following table:

$$\begin{tabular}{ l | c}
X & $\Xi_*(X)$ \\
\hline $H\mathbb{F}_2$ & $H\mathbb{F}_2$\\
$H\mathbb{Z}$ & $H\mathbb{Z}$ \\
$KU$ & $KGL$\\
$KO$ & $KQ$\\
$BP$ & $BPGL$\\
$MU$ & $MGL$\\
$tmf$ & $mmf$
\end{tabular}$$

The intuition why this topological presentation is possible, lies in the following
observation: cellular motivic spectra are the largest class of motivic spectra $Z$
that are controlled by bigraded stable homotopy groups $ \pi_{p,q}(Z)$ for $p,q \in \ZZ$, defined as homotopy classes of maps from the motivic sphere $S^{p,q} $ to $Z.$
Consequently, to model cellular motivic spectra by topological data one needs to add an extra grading to the topological grading that records the grading of the twist $(-)(q) := (-) \smash (\mathbb{G}_m[-1])^{\smash q}$.
Said differently, the twist is transformed into the shift of $\ZZ$-graded objects
and $\pi_{p,q}(Z)= \pi_p(Z'_q)$ when $Z'$ is the topological model for $Z.$



In this article we extend the result of \cite{mmf} from 2-complete complex cellular motivic spectra to cellular motivic spectra over any base scheme 
and cellular modules over such.
To state our results we use the following $\infty$-categories:
for every motivic $\bE_\infty$-ring spectrum $A$ over some base scheme $\rS$ 
let $\Mod_A(\SH(\rS))^{\mathrm{cell}}$ be the symmetric monoidal $\infty$-category of cellular motivic $A$-modules. 
Assigning the free $A$-module on the $\mathbb{P}^1$-stabilization of the $\bA^1$-localization of the constant Nisnevich sheaf gives a symmetric monoidal functor $\Sp \to \Mod_A(\SH(\rS))^{\mathrm{cell}}$ that admits a right adjoint
$\Gamma$, wich is lax symmetric monoidal and takes the spectrum of global sections.
Let $\Sp^\ZZ$ be the $\infty$-category of $\ZZ$-graded spectra that carries a symmetric monoidal structure given by Day-convolution, for which $\bE_\bk$-algebras  
for $1 \leq \bk \leq \infty$ are lax $\bE_{\bk}$-monoidal functors $\ZZ \to \Sp$ \cite[Theorem 2.2.6.2.]{lurie.higheralgebra}.
Since $(\ZZ,+)$ is initial among monoidal $\infty$-categories consisting of tensor-invertible objects \cite[Theorem 5.1.0.8]{lurie.higheralgebra}, there is a unique monoidal functor $A \smash (S^{-n,-1})^{\smash(-)}: \ZZ \to \Mod_A(\SH(\rS))^{\mathrm{cell}}$ sending $\ell$ to $A \smash S^{-n \ell,-\ell}$.
Composing the latter with the lax symmetric monoidal functor 
$\Gamma: \Mod_A(\SH(\rS))^{\mathrm{cell}}\to \Sp$ gives a lax monoidal functor $\ZZ \to \Sp$ corresponding to an $\bE_1$-algebra $\Theta$ in $\Sp^\ZZ.$

\begin{theorem}\label{0}(Theorem \ref{mot})
Let $\rS$ be a base scheme, $n \in \ZZ$ and $A$ a motivic $\bE_\infty$-ring spectrum over $\rS$. There is a canonical equivalence 
\begin{equation}\label{Eq1}
\Mod_A(\SH(\rS))^{\mathrm{cell}} \simeq \Mod_{\Theta}(\Sp^\ZZ).
\end{equation}
	
\end{theorem}

For any $\ell \in \ZZ$ the shift $(-)\smash S^{n\ell,\ell} $ in $\Mod_A^\mathrm{cell}$
corresponds under equivalence (\ref{Eq1}) to an autoequivalence of $\Mod_{\Theta}(\Sp^\ZZ)$
that precomposes with the translation $$ \ZZ \to \ZZ, \bk \mapsto k-\ell.$$
This guarantees that for any cellular motivic $A$-module spectrum $M$ modeled by a 
$\ZZ$-graded spectrum $M'$ via equivalence (\ref{Eq1}) and any $k,\ell \in \ZZ$ we have an identity of stable homotopy groups $$ \pi_{k,\ell}(M) \simeq \pi_{k-n \ell}(M'_\ell).$$

Equivalence (\ref{Eq1}) is generally not symmetric monoidal.
To promote it to a symmetric monoidal equivalence,
we introduce the following notion of multiplicative periodization:
for any $\n,\m \in \ZZ$ a multiplicative $(\n,\m)$-periodization of a motivic $\bE_\infty$-ring spectrum $A$ is a graded $\bE_\infty$-algebra structure under $A$ on the periodization $ PA=\bigoplus_{\ell \in \ZZ} A \smash S^{\n \ell,\m\ell}$ whose multiplication for $\ell, \ell' \in \ZZ$ is the canonical equivalence $$ A \smash S^{\n \ell,\m\ell} \smash_A  A \smash S^{\n \ell',\m\ell'} \simeq  A \smash S^{\n (\ell+\ell'),\m(\ell+\ell')}.$$

Stated differently, a multiplicative $(\n,\m)$-periodization of $A$ is a lift of
the unique monoidal functor $\ZZ \to \Mod_A(\SH(\rS))^{\mathrm{cell}}, \ell \mapsto A \smash S^{-n \ell,-\ell}$ to a symmetric monoidal functor. Consequently, a multiplicative $(\n,\m)$-periodization of $A$ gives rise to a lift of the lax monoidal functor $\Gamma \circ A \smash (S^{-n,-1})^{\smash(-)}: \ZZ \to \Sp$ to a lax symmetric monoidal functor and so to a lift of the $\bE_1$-algebra $\Theta$ to an $\bE_{\infty}$-algebra in $\Sp^\ZZ.$
By a result of \cite[Theorem 16.19]{Bachmann} every motivic $\bE_{\infty}$-ring spectrum that admits an $\bE_{\infty}$-orientation also admits a multiplicative periodization. 
Besides the tautological example $MGL$, the motivic spectrum representing algebraic cobordism, the motivic spectra $KGL$ representing Weibels homotopy $K$-theory and $M(\ZZ)$ representing motivic cohomology over a Dedekind domain of mixed characteristic admit an $\bE_{\infty}$-orientation \cite[Proposition 5.10]{Gepner} \cite[Remark 11.2]{Spitzweck2} and so a multiplicative periodization. 

We prove that a multiplictive periodization of a motivic $\bE_\infty$-ring spectrum $A$ gives rise to a lift of equivalence (\ref{Eq1}) to a symmetric monoidal equivalence: 
\begin{theorem}\label{1} (Theorem \ref{mot3}) Let $\rS$ be a base scheme, $n \in \ZZ$ and $A$ a motivic $\bE_\infty$-ring spectrum over $\rS$ that admits a multiplicative $(n,1)$-periodization. 
Then equivalence (\ref{Eq1}) 
$$\Mod_A(\SH(\rS))^{\mathrm{cell}} \simeq \Mod_{\Theta}(\Sp^\ZZ)$$
refines to a symmetric monoidal equivalence.
	
\end{theorem}


Moreover Theorem \ref{1} gives a topological model for complex cellular motivic $A$-modules over any complex motivic $\bE_\infty$-ring spectrum $A$
via a symmetric monoidal equivalence: we prove that for any $m \in \ZZ$ every complex motivic $\bE_\infty$-ring spectrum $A$ admits a multiplicative $(0,m)$-periodization (Proposition \ref{Corf} 1.) to obtain the following corollary:

\begin{corollary}\label{3}(Corollary \ref{motcor}) 
Let $A$ be a complex motivic $\bE_\infty$-ring spectrum.
Then $\Theta$ can be promoted to an $\bE_{\infty}$-algebra and equivalence (\ref{Eq1}) 
\begin{equation}\label{2}
\Mod_A(\SH(\bC))^{\mathrm{cell}} \simeq \Mod_{\Theta}(\Sp^\ZZ)\end{equation} for $n=0$
refines to a symmetric monoidal equivalence.		

\end{corollary}

In general motivic $\bE_\infty$-ring spectra do not admit a multiplicative periodization.
To capture the symmetric monoidal structure in the general situation, we offer another topological presentation, where $\ZZ$ is replaced by the category $\mJ$ 
whose objects are pairs of natural numbers $(n, m)$ and whose morphisms $(n,m)\to (n', m')$ between two pairs of numbers are triples consisting of two injections $\alpha: \underline{n} \to \underline{n'}, \beta: \underline{m} \to \underline{m'}$ and an isomorpism
$\underline{n'} \setminus \alpha(\underline{n}) \cong \underline{m'} \setminus \beta(\underline{m})$ of the complements.
The category $\mJ$ was introduced by Sagave-Schlichtkrull \cite[Definition 4.2]{Sagave2011DiagramSA} in the study of symmetric ring spectra and identified as a model for Quillen's localization of the groupoid of finite sets and bijections \cite[Proposition 4.4]{Sagave2011DiagramSA}. The classifying space of the latter is the infinite loop space of the sphere spectrum \cite[Corollary 4.5]{Sagave2011DiagramSA},
where the infinite loop space structure comes from the symmetric monoidal structure on $\mJ$ given by disjoint union.

Since the infinite loop space of the sphere spectrum is the free grouplike $\bE_\infty$-space generated by a point, symmetric monoidal functors out of
$\mJ$ 
inverting all morphisms classify tensor invertible objects.
So every tensor invertible motivic $A$-module $X$ is the image of $(1,0)$ under a unique symmetric monoidal functor $X^{\ot(-)-(-)}:\mJ \to \Mod_A(\SH(\rS)), (n,m) \mapsto X^{\ot n-m}$ inverting all morphisms.
In particular, for every $n \in \ZZ$ there is a symmetric monoidal functor 
\begin{equation}\label{Kuh}
A \smash (S^{n,1})^{\smash(-)-(-)}: \mJ \to \Mod_A(\SH(\rS))^{\mathrm{cell}} \end{equation} that lifts the monoidal functor
$A \smash (S^{n,1})^{\smash(-)}: \ZZ \to \Mod_A(\SH(\rS))^{\mathrm{cell}} $ 
along the unique monoidal functor $\ZZ \to \mJ^{-1}\mJ, \ell \mapsto (\ell,0).$
By Theorem \ref{0} cellular motivic $A$-modules are modeled by modules over an $\bE_1$-algebra $\Theta$ in $\ZZ$-graded spectra whose $\ell$-th term for $\ell \in \ZZ$ is the spectrum of global sections of the motivic spectrum $(A \smash S^{n,1})^{\smash -\ell} $.
We use the symmetric monoidal functor (\ref{Kuh}) to lift the $\ZZ$-graded $\bE_1$-algebra $\Theta$ in 
motivic $A$-modules to a $\mJ$-parametrized $\bE_\infty$-algebra $\Theta'$
whose value at $(\ell,k) \in \mJ$ is the spectrum of global sections of the motivic spectrum $(A \smash S^{n,1})^{\smash k-\ell}  $.
We prove the following symmetric monoidal version of Theorem \ref{0}:

\begin{theorem}\label{4}(Theorem \ref{mot2})
Let $\rS$ be a base scheme, $n \in \ZZ$ and $A$ a motivic $\bE_\infty$-ring spectrum over $\rS$.
There is a symmetric monoidal equivalence 
$$\Mod_A(\SH(S))^{\mathrm{cell}} \simeq \Mod_{\Theta'}(\Fun^\simeq(\mJ,\Sp))$$
lifting equivalence (\ref{Eq1}) along restriction along the monoidal functor $\ZZ \to \mJ^{-1}\mJ,$
where $	\Fun^\simeq(\mJ,\Sp)$ is the $\infty$-category of functors inverting all morphisms.
	
\end{theorem}

To prove Theorems \ref{0}, \ref{1}, \ref{2}, \ref{4} we develop a machinery to present cellular objects in stable presentably symmetric monoidal $\infty$-categories by modules in graded spectra.
Given a motivic $\bE_{\infty}$-ring spectrum $A$ a motivic $A$-module
is cellular if it is generated under arbitrary shifts and small colimits by all (possibly negative) tensor powers of $A \smash S^{2,1}$.
In analogy given a stable presentably symmetric monoidal $\infty$-category $\mD$ containing a tensor invertible object $X$ we call an object of $\mD$ a $X$-cellular object if it belongs to the full subcategory of $\mD$ generated under arbitrary shifts and small colimits by all (possibly negative) tensor powers of $X$
(Definition \ref{cell}).


In analogy to the case of motivic spectra the full subcategory $\Cell_X(\mD)$ 
of $X$-cellular objects in $\mD$ is controlled by bigraded $X$-cellular stable homotopy groups
\begin{equation}\label{eqa}
\pi^X_{p,q}(\Z):= \pi_p(\mD(X^{\ot q},\Z)) \end{equation}
for $\Z \in \Cell_X(\mD)$
and $p,q \in \ZZ.$ For $\mD= \Mod_A(\SH(S))$ for a motivic $E_\infty$-algebra $A$ and $X=A \smash S^{0,1} $ this recovers the stable bigraded homotopy groups.

Equation (\ref{eqa}) suggests that a $X$-cellular object $Z$ is determined by the
$\ZZ$-graded spectrum $\bigoplus_{q \in \ZZ}\mD(X^{\ot q},Z)$.
We prove the following theorem:

\begin{theorem}\label{tre0}(Theorem \ref{tre}, Corollary \ref{Corok0})
Let $\mD$ be a presentably symmetric monoidal $\infty$-category whose tensor unit is compact, and $X$ a tensor invertible object of $\mD.$
The functor 
\begin{equation*}
\Cell_{X}(\mD) \to \Sp^\ZZ, Z \mapsto \bigoplus_{q \in \ZZ}\mD(X^{\ot q},Z)
\end{equation*}
lifts to an equivalence
\begin{equation}\label{0001}
\Cell_{X}(\mD)\to \Mod_{\bigoplus_{q \in \ZZ}\mD(X^{\ot q},\tu_\mD)}(\Sp^\ZZ).\end{equation} 

If the monoidal functor $X^{\ot(-)}: \ZZ \to \mD$ refines to a symmetric monoidal functor, equivalence (\ref{0001}) refines to a symmetric monoidal equivalence.

\end{theorem}
Theorem \ref{tre0} implies Theorems \ref{0}, \ref{1}. Moreover we prove a symmetric monoidal version of Theorem \ref{tre0} (Corollary \ref{Corok}), which gives Theorem \ref{4}. And we show a variant of Theorem \ref{tre0} for filtered objects, which gives Theorem \ref{2}.
To avoid proving all these versions separately, we build up a framework
to treat all these cases simoultaneously (Theorem \ref{tre}).

\vspace{1mm}

\subsection{Overview}

In the following we give an overview about the structure of this work.
 
In section \ref{cellu} we define $X$-cellular objects in a stable presentably symmetric monoidal $\infty$-category $\mD$ with respect to a tensor invertible object $X \in \mD$ (Definition \ref{cell}) abstracting the notion of cellular motivic spectrum from motivic homotopy theory.
While in our applications we only consider $X$-cellular objects for a tensor invertible object $X\in \mD$, for the theory we develop it is natural to allow the following more general cases: 
\begin{enumerate}
\item We allow $X$ to be a dualizable object.
\item We allow $X$ to be a set of dualizable objects.
\item We allow $\mD$ to be $k$-monoidal for any $1 \leq k \leq \infty.$
\end{enumerate}

In section \ref{Dua} we study the interaction between cellular objects and Day-convolution to describe cellular objects in graded spectra.
The stable $\infty$-category $\Sp$ is generated by the tensor unit, the sphere spectrum, under arbitrary shifts and small colimits, and so generated under arbitrary shifts and small colimits by one dualizable object.
We prove that more generally for any small $k$-monoidal $\infty$-category $\mB$, in which every object is dualizable, the $\infty$-category of $\mB$-graded objects in $\Sp$ - the $\infty$-category of functors $\mB \to \Sp$ endowed with Day-convolution - is generated under arbitrary shifts and small colimits by a set of dualizable objects (Proposition \ref{univcell}).

We combine this result with a universal property of Day-convolution (Proposition \ref{univ}) to extend any $k$-monoidal functor $X: \mB^\op \to \mD$ 
to a left adjoint $k$-monoidal functor $\bar{X}: \Fun(\mB,\Sp) \to \mD$ that lands in the full subcategory $\Cell_X(\mD) \subset \mD$ of objects cellular with respect to the essential image of $X$ (Corollary \ref{tr}). 
In section \ref{Pres} we lift the right adjoint $\gamma: \Cell_X(\mD) \to \Fun(\mB,\Sp)$ of $\bar{X}$ to an equivalence $$\gamma': \Cell_X(\mD) \to \RMod_{\Theta}\Fun(\mB, \Sp)$$ (Theorem \ref{tre}), where $\Theta$ is a $\mB$-graded $\bE_\bk$-algebra in $\Sp.$
We deduce Theorem \ref{tre} from Propositions \ref{univcell} and \ref{tt},
where the latter follows from Lemma \ref{qq} that analyzes the unit of an induced adjunction on $\infty$-categories of modules
when evaluated at free modules on dualizable objects. 
In section \ref{sec4} we apply our theory to cellular stable motivic homotopy theory.

\subsection{Notation and Terminology}\label{No}

\begin{itemize}
\item For any $\infty$-category $\mC$ and objects $\X, \Y$ of $\mC$ let $\mC(\X,\Y) $ be the space of maps from $\X $ to $\Y $ in $\mC $ and $\mC^\simeq $ the maximal subspace in $\mC$.

\item For any $\infty$-categories $\mA,\mB$ let $ \Fun{(\mA,\mB}) $ be the $\infty$-category of functors.

\item Let $ \mS$ be the $\infty$-category of spaces and $\Sp$ the $\infty$-category of spectra.

\item Let $\Cat_{\infty}$ be the $\infty$-category of small $\infty$-categories,
$\widehat{\Cat}_{\infty}$ the $\infty$-category of not necessarily small $\infty$-categories and $\Cat_{\infty}^{\rc\rc} \subset \widehat{\Cat}_{\infty}$
the subcategory of $\infty$-categories having small colimits and functors preserving small colimits.

\item If $\mD$ is an $\infty$-category with zero object, let $ \Sigma:  \mD \to \mD $ be the suspension and 
$ \Omega:  \mD \to \mD $ the loops if the necessary pushouts/ pullbacks exist.
In this case we obtain an adjunction $\Sigma: \mD \rightleftarrows \mD: \Omega.$ 

\item An object $\X$ of an $\infty$-category $\mC$ is compact if the functor $\mC(\X,-):\mC \to \mS$ preserves filtered colimits. 

\item For every monoidal $\infty$-category $\mC$ we write $\tu_\mC$ for the tensor unit.

\item An object $\X$ of a monoidal $\infty$-category $\mC$ is a tensor inverse of
an object $\Y$ of $\mC$ if there are equivalences $ \Y \ot \X \simeq \tu_\mC, \X \ot \Y \simeq \tu_\mC.$ 
	
	

\end{itemize}

\subsection{Acknowledgements}

We like to thank Peter Arndt, David Gepner, Thomas Nikolaus and Markus Spitzweck for helpful and inspiring discussions.


\vspace{2mm} 

\section{Cellular objects}

In this section we define cellular objects with respect to any dualizable object of a stable presentably symmetric monoidal $\infty$-category (Definition \ref{cell}). We start with introducing the basic notions we are using throughout this article.

In the following we work with $\bE_{\bk}$-monoidal $\infty$-categories \cite[Definition 2.1.2.13]{lurie.higheralgebra}, where $\bE_{\bk}$ is the $\bk$-th little cubes $\infty$-operad \cite[\text{Definition} \ 5.1.0.4]{lurie.higheralgebra} for $0 \leq \bk \leq \infty$.
Given an $\bE_\bk$-monoidal $\infty$-category $\mC$ let $\tu_\mC$ be the 
tensor unit.
We say that an $\bE_\bk$-monoidal $\infty$-category $\mC$ is compatible with small colimits if $\mC$ admits small colimits and the tensor product 
preserves small colimits component-wise.
Associated to the notion of $\bE_\bk$-monoidal $\infty$-category are the notions of
(lax) $\bE_\bk$-monoidal functor \cite[Definition 2.1.2.7, Definition 2.1.3.7]{lurie.higheralgebra}.
For any $\bE_\bk$-monoidal $\infty$-categories $\mA, \mB$ we write
$$\Fun^{\ot,\bE_\bk, \lax}(\mA,\mB)$$ for the $\infty$-category of lax $\bE_\bk$-monoidal functors $\mA \to \mB$ and $$\Fun^{\ot, \bE_\bk}(\mA,\mB) \subset \Fun^{\ot,\bE_\bk, \lax}(\mA,\mB)$$ for the full subcategory of $\bE_\bk$-monoidal functors.
Moreover we write $\Alg_{\bE_{\bk}}(\mB):= \Fun^{\ot,\bE_\bk, \lax}(*,\mB)$ for the $\infty$-category of $\bE_\bk$-algebras in $\mB$, which are by definiton the lax $\bE_\bk$-monoidal functors $* \to \mB.$
By \cite[Proposition 3.2.1.8, Lemma 3.2.1.10]{lurie.higheralgebra} there is an initial $\bE_{\bk}$-algebra in $\mB$ lying over the tensor unit $\tu_\mB.$

By \cite[Theorem 5.1.2.2]{lurie.higheralgebra} for any $0 \leq \bk, \ell \leq \infty$ the Boardmann-Vogt tensor product $\bE_{\bk} \ot \bE_\ell$ is
$\bE_{\bk+\ell}.$
This guarantees that for any $\bk + \ell$-monoidal $\infty$-category $\mB$ the $\infty$-category 
$\Alg_{\bE_{\bk}}(\mB)$ is $\bE_\ell$-monoidal and there is a canonical equivalence $$\Alg_{\bE_\ell}(\Alg_{\bE_{\bk}}(\mB)) \simeq \Alg_{\bE_{\bk+\ell}}(\mB)$$
\cite[Proposition 3.2.4.3]{lurie.higheralgebra}.
Moreover we use $\bE_\bk$-monoidal $\infty$-categories equipped with a left action of an $\bE_{\bk+1}$-monoidal $\infty$-category $\mC.$ Let $\mathbb{LM}$ be the $\infty$-operad governing left modules \cite[Notation 4.2.1.6.]{lurie.higheralgebra}.
An $\bE_\bk$-monoidal $\infty$-category left tensored over $\mC$ is a $\mathbb{LM} \ot \bE_{\bk}$-monoidal $\infty$-category, whose restriction to $\bE_1 \ot \bE_\bk\simeq \bE_{\bk+1}$ is $\mC$. So an $\bE_\bk$-monoidal $\infty$-category left tensored over $\mC$ has an underlying $\bE_\bk$-monoidal $\infty$-category and an underlying $\infty$-category $\mD$ left tensored over $\mC$ that are compatible to each other. In particular, the left action functor $\mC \times \mD \to \mD$ is
an $\bE_\bk$-monoidal functor.
An $\bE_\bk$-monoidal $\infty$-category left tensored over $\mC$ is compatible with smalll colimits if the underlying $\bE_\bk$-monoidal $\infty$-category is compatible with small colimits and the left $\mC$-action preserves small colimits component-wise.
A (lax) $\mC$-linear (lax) $\bE_\bk$-monoidal functor $\mD \to \mE$ between $\bE_\bk$-monoidal $\infty$-categories left tensored over $\mC$ is a (lax) $\mathbb{LM} \ot \bE_{\bk}$-monoidal functor \cite[Definition 2.1.2.7, Definition 2.1.3.7]{lurie.higheralgebra}.

For any two $\bE_\bk$-monoidal $\infty$-categories $\mA, \mB$ left tensored over $\mC$ we write
$$\Fun_\mC^{\ot, \bE_\bk}(\mA,\mB) $$ for the full subcategory of $\bE_\bk$-monoidal $\mC$-linear functors.

\subsection{Dualizable and tensor invertible objects}\label{cellu}

\begin{recollection}\label{com}
	
Let $\mC$ be a monoidal $\infty$-category.
An object $\X $ of $\mC$ is a left dual of an object $\Y$ of $\mC$ 
if there are morphisms $ \eta: \tu_\mC \to \Y \ot \X $ and $ \varepsilon: \X \ot \Y \to  \tu_\mC $ in $\mC$ such that the following compositions are homotopic to the identity:
\begin{equation*}\label{tumba}
\X \simeq \X \ot \tu_\mC \xrightarrow{\X \ot \eta} \X \ot \Y \ot \X \xrightarrow{\varepsilon \ot \X} \tu_\mC \ot \X \simeq \X, $$$$ \Y \simeq \tu_\mC \ot \Y \xrightarrow{\eta \ot \Y} \Y \ot \X \ot \Y \xrightarrow{\Y \ot \varepsilon} \Y \ot \tu_\mC \simeq \Y.\end{equation*}


These identifications give adjunctions $$\X \ot (-): \mC \rightleftarrows \mC : \Y \ot (-), (-) \ot \Y: \mC \rightleftarrows \mC : (-) \ot \X.$$
So if a left (right) dual exists, it is unique, and 
we write $\X^\dual$ for the left dual of $\X$ and $\X^*$ for the right dual of $\X$ if they exist.
An object of $\mC$ is left (right) dualizable if it admits a left (right) dual. An object of $\mC$ is dualizable if it is left and right dualizable.
A monoidal $\infty$-category is left rigid, right rigid, rigid, respectively, if every object is left dualizable, right dualizable, dualizable, respectively.
Every tensor inverse is a left and right dual.

By the latter adjunctions left duals and right duals are compact if the tensor unit is compact and the tensor product 
preserves filtered colimits.



For any $\bE_2$-monoidal $\infty$-category $\mC$
an object $\X$ of $\mC$ is a left dual of an object $\Y$ of $\mC$ if and only if $\X$ is a right dual of $\Y$, and we say that $\X$ is a dual of $\Y$.

\end{recollection}

\begin{remark}\label{spa}Let $\mC$ be a monoidal $\infty$-category containing
right dualizable objects $\A,\B$ and $\phi: \A \to \B$ a morphism. A right dual of $\phi$ in $\Fun([1],\mC) $ is an inverse of $\phi^*: \B^* \to \A^*.$
Thus $\phi$ is right dualizable in $\Fun([1],\mC) $ if and only if $\phi$ (equivalently $\phi^*$) is an equivalence.

This implies that for every $1 \leq \bk \leq \infty$ and $\bE_\bk$-monoidal $\infty$-categories $\mB, \mD$, where $\mB$ is right rigid, the $\infty$-category $\Fun^{\ot,\bE_\bk}(\mB,\mD) $ is a space: a morphism in $\Fun^{\ot,\bE_\bk}(\mB,\mD) $
corresponds to an $\bE_\bk$-monoidal functor $\mB \to \Fun([1],\mD)$
that sends any $\Z \in \mB$ to a right dualizable object of $ \Fun([1],\mD)$ that is an equivalence.

\end{remark}
 
\begin{notation}
Let $\mC$ be a monoidal $\infty$-category.
Let $ \Pic(\mC) \subset \mC^\simeq$ be the full subspace of tensor invertible objects.
\end{notation}

\begin{example}\label{QS}Let $1 \leq \bk \leq \infty.$
The space $ \Omega^\bk(\Sigma^\bk(S^0))$
is the free grouplike $\bE_{\bk}$-space generated by a point \cite[Theorem 5.2.6.10]{lurie.higheralgebra}.
So for every $\bE_\bk$-monoidal $\infty$-category $\mD$ evaluation at $1$ induces an equivalence
$$\Fun^{\ot,\bE_\bk}(\Omega^\bk(\Sigma^\bk(S^0)),\mD) \simeq \Pic(\mD).$$
Note that the left hand $\infty$-category is a priori a space (Corollary \ref{spa}).
So every object $\X$ of $\Pic(\mD) $ is the image of $1 \in \pi_0(\Omega^\bk(\Sigma^\bk(S^0)))=\ZZ $ under a unique monoidal functor $ \X^{\ot (-)} : \Omega^\bk(\Sigma^\bk(S^0)) \to \mD $.

We mainly apply this to $\bk=1$, where $\Omega(S^1)=\ZZ$, and
$\bk=\infty$.

	
\end{example}

We will use the following model for $ \Omega^\infty(\Sigma^\infty(S^0)):$

\begin{notation}\label{jona}	
Let $\mJ$ be the category whose objects are pairs of natural numbers
$(\n, \m)$ and whose morphisms $(\n,\m) \to (\n', \m')$
are triples consisting of two injections $\alpha: \underline{\n} \to \underline{\n'}, \beta: \underline{\m} \to \underline{\m'}$ and an isomorpism
$\underline{\n'} \setminus \alpha(\underline{\n}) \cong \underline{\m'} \setminus \beta(\underline{\m}).$ 
See \cite[Definition 4.2]{Sagave2011DiagramSA} for a proof that $\mJ$ is indeed a category.

Disjoint union makes $\mJ$ to a symmetric monoidal category
\cite[Proposition 4.3]{Sagave2011DiagramSA}.

\end{notation}

Let $\Sigma$ be the groupoid of finite sets and bijections seen as a symmetric monoidal groupoid via disjoint union.
By \cite[Proposition 4.4]{Sagave2011DiagramSA} the symmetric monoidal category $\mJ$ is equivalent to Quillen's localization construction for $\Sigma$,
where Quillen's localization construction produces from a symmetric monoidal groupoid $\mA$ a symmetric monoidal category $\mA^{-1}\mA$ whose classifying space is the universal grouplike $\bE_\infty$-space associated to $\mA.$
Consequently, the classifying space of the symmetric monoidal category $\mJ$ is the universal grouplike $\bE_\infty$-space associated to $\Sigma$, which is the free grouplike $\bE_\infty$-space generated by a point since $\Sigma$ is the free $\bE_\infty$-space generated by a point \cite[Example 3.1.3.14]{lurie.higheralgebra}.
So by Example \ref{QS} the classifying space of the symmetric monoidal category $\mJ$ is $ \Omega^\infty(\Sigma^\infty(S^0))$ \cite[Corollary 4.5]{Sagave2011DiagramSA}.

In other words, there is a symmetric monoidal functor $\mJ \to  \Omega^\infty(\Sigma^\infty(S^0))$, which sends $(1,0)$ to 1,
such that for any $\infty$-category $\mB$ the induced functor
$$ \Fun(\Omega^\infty(\Sigma^\infty(S^0)), \mB) \to \Fun(\mJ, \mB)$$ is fully faithful and the essential image precisely consists of the functors $\mJ \to \mB$ inverting all morphisms.
This guarantees that for any symmetric monoidal $\infty$-category $\mD$ the induced functor
$$ \Fun^{\ot, \bE_\infty}(\Omega^\infty(\Sigma^\infty(S^0)), \mD) \to \Fun^{\ot, \bE_\infty}(\mJ, \mD)$$ is fully faithful and the essential image precisely consists of the symmetric monoidal functors $\mJ \to \mD$ inverting all morphisms.
So by Example \ref{QS} the functor $\Fun^{\ot, \bE_\infty}(\mJ, \mD) \to \Pic(\mD)$
evaluating at $(1,0) \in \mJ$ restricts to an equivalence on the full subcategory of symmetric monoidal functors $\mJ \to \mD$ inverting all morphisms.




\vspace{2mm}

Now we are ready to define cellular objects.

\begin{definition}\label{cell} Let $1 \leq \bk \leq \infty$ and $ \mD $ a stable $\bE_\bk$-monoidal $\infty$-category compatible with small colimits, $\mB$ a small left rigid $\bE_\bk$-monoidal $\infty$-category and $\X: \mB \to \mD$ an $\bE_\bk$-monoidal functor.
Let $$ \Cell_{\X}(\mD) \subset \mD $$
be the smallest full stable subcategory of $\mD$ closed under small colimits containing the essential image of $ \X$.
\end{definition}
We call $ \Cell_{\X}(\mD) $ the $\infty$-category of $\X$-cellular objects of $\mD$. 

\begin{remark}
The full subcategory $\Cell_{\X}(\mD) \subset \mD$ only depends on the essential image of $\X.$
	
\end{remark}

Since the essential image of $\X$ is an $\bE_\bk$-monoidal full subcategory
of $\mD$ and the tensor product of $\mD$ preserves small colimits component-wise, $ \Cell_{\X}(\mD) \subset \mD$ is an $\bE_\bk$-monoidal full subcategory of $\mD.$
Similarly, if $\mC$ is a stable $\bE_{\bk+1}$-monoidal $\infty$-category compatible with small colimits and $\mD$ is a stable $\bE_\bk$-monoidal $\infty$-category left tensored over $\mC$ compatible with small colimits, 
$ \Cell_{\X}(\mD) \subset \mD $ is an $\bE_\bk$-monoidal full subcategory of $\mD$ closed under the left $\mC$-action if $\mC$ is generated under small colimits and arbitrary shifts by the tensor unit.

\begin{example}\label{tens} Let $1 \leq \bk \leq \infty$, let $ \mD $ be a stable $\bE_\bk$-monoidal $\infty$-category compatible with small colimits, and $\X \in \Pic(\mD)$. 
Let $\X^{\ot(-)}: \Omega^\bk(S^\bk) \to \mD$ be the unique $\bE_\bk$-monoidal functor sending $1 \in \pi_0(\Omega^\bk(S^\bk))=\ZZ $ to $\X$ (Example \ref{QS}).
Then $ \Cell_{\X}(\mD)= \Cell_{\X^{\ot(-)}}(\mD)$ is the 
smallest full stable subcategory of $\mD$ closed under small colimits containing
all objects $\X^{\ot \n} $ for $\n \in \ZZ.$
\end{example}

The following example is our motivating example for Definition \ref{cell}:

\begin{example}\label{cella}
	
Let $\rS$ be a Noetherian separated scheme of finite Krull dimension and $\SH(\rS)$ the symmetric monoidal $\infty$-category of motivic spectra over $\rS.$
Let $\A$ be an $\bE_{\infty}$-algebra in $\SH(\rS)$.
For every $\n \in \ZZ$ the motivic sphere $S^{n,1}= \Sigma^\n(S^{0,1})$ is tensor invertible in $\SH(\rS)$ so that $\A \smash S^{n,1}$ is tensor invertible in $\Mod_\A(\SH(\rS))$.
We define the full subcategory of cellular motivic $\A$-module spectra by
$$\Mod_\A(\SH(\rS))^{\mathrm{cell}}:=\Cell_{\A \smash S^{\n,1}} (\Mod_\A(\SH(\rS))),$$ which does not depend on $\n \in \ZZ$.
\end{example}


\subsection{Graded objects are dualizably generated}\label{Dua}

Let $\mC$ be stable presentably $\bE_\bk$-monoidal $\infty$-category generated by the tensor unit under arbitrary shifts and small colimits, where $1 \leq \bk \leq \infty.$
Then $\mC$ is generated under arbitrary shifts and small colimits by one dualizable object.
We prove in this subsection that more generally the $\bE_\bk$-monoidal
$\infty$-category of functors $\Fun(\mB,\mC)$ for any small left rigid $\bE_\bk$-monoidal $\infty$-category $\mB$ is dualizably generated, i.e. generated under arbitrary shifts and small colimits by a set of dualizable objects (Proposition \ref{univcell}). Here we view $\Fun(\mB,\mC)$ as an $\bE_\bk$-monoidal $\infty$-category via Day-convolution (Proposition \ref{Day1}).
Via Proposition \ref{univ} this will be a key result in order to relate $\X$-cellular objects in any stable $\bE_\bk$-monoidal $\infty$-category $\mD$
left tensored over $\mC$ compatible with small colimits, where $\X: \mB^\op \to \mD$ is an $\bE_\bk$-monoidal functor, to modules in $\Fun(\mB,\mC)$ (Corollary \ref{tr}).

We start with recalling the Day-convolution for $\infty$-categories 
\cite[2.2.6]{lurie.higheralgebra}, \cite{https://doi.org/10.48550/arxiv.2006.08269}, \cite{article}, \cite[Theorem 10.13]{Heine2023AnEB}.

\begin{recollection}\label{Day1}\label{Day2}\label{rem1}

Let $1 \leq \bk \leq \infty$, let $\mB$ be a small $\bE_\bk$-monoidal $\infty$-category and $\mC$ an $\bE_\bk$-monoidal $\infty$-category compatible with small colimits. 
\begin{enumerate}
\item By \cite[Theorem 2.2.6.2]{lurie.higheralgebra} the $\infty$-category $\Fun(\mB, \mC)$ carries an $\bE_\bk$-monoidal structure 
compatible with small colimits such that the evaluation functor $\Fun(\mB, \mC) \times \mB \to \mC$
is $\bE_\bk$-monoidal and induces for every $\bE_\bk$-monoidal $\infty$-category $\mA$ an equivalence
\begin{equation}\label{equs}
\Fun^{\ot, \bE_\bk, \lax}(\mA, \Fun(\mB, \mC)) \simeq \Fun^{\ot, \bE_\bk, \lax}(\mA \times \mB, \mC). \end{equation} 
For $\mA$ the final $\bE_\bk$-monoidal $\infty$-category equivalence (\ref{equs})  identifies $\bE_\bk$-algebras in $\Fun(\mB,\mC)$ with lax $\bE_\bk$-monoidal functors $\mB \to \mC.$

\item The tensor product of two functors $\F, \G:\mB \to \mC$ is the left kan-extension of the functor $ \mB \times \mB \xrightarrow{\F \times \G} \mC \times \mC \xrightarrow{\ot_\mC} \mC$ along the functor $\ot_\mB: \mB \times \mB \to \mB.$

\item Let $\alpha: \mA \to \mB$ be a lax $\bE_\bk$-monoidal functor of small $\bE_\bk$-monoidal $\infty$-categories and $\phi: \mC \to \mD$ a lax $\bE_\bk$-monoidal functor of $\bE_\bk$-monoidal $\infty$-categories compatible with small colimits.
The functor $\phi_*: \Fun(\mB, \mC) \to \Fun(\mB, \mD)$ is lax $\bE_\bk$-monoidal
and is $\bE_\bk$-monoidal if $\phi$ is $\bE_\bk$-monoidal and preserves small colimits. 
The functor $\alpha^*: \Fun(\mB, \mC) \to \Fun(\mA, \mC)$ is lax $\bE_\bk$-monoidal
and admits a left adjoint taking left kan extension along $\alpha,$
which is $\bE_\bk$-monoidal if $\alpha$ is $\bE_\bk$-monoidal.
\item By \cite[Proposition 6.18]{https://doi.org/10.48550/arxiv.2006.08269} the Yoneda-embedding $\y_\mB: \mB \to \Fun(\mB^\op,\mS)$ refines to an $\bE_\bk$-monoidal functor that induces for every locally small $\bE_\bk$-monoidal $\infty$-category $\mD$ compatible with small colimits an equivalence
\begin{equation}\label{ujn}
\Fun^{\ot, \bE_\bk, \L}{(\Fun(\mB^\op, \mS),\mD}) \simeq \Fun^{\ot, \bE_\bk}{(\mB ,\mD}), \end{equation} where the left hand side are left adjoint $\bE_\bk$-monoidal functors.

\end{enumerate}

\end{recollection}




\begin{lemma}\label{lan}
	
Let $\mB$ be a small $\infty$-category, $\mC$ an $\infty$-category that admits small colimits, $\Y \in \mB, \C \in \mC$ and $\rH \in \Fun(\mB,\mC)$.
The map $$\Fun(\mB,\mC)(\C \ot \mB(\Y,-), \rH) \to \mC(\C \ot \mB(\Y,\Y), \rH(\Y)) \to \mC(\C,\rH(\Y))$$ induced by the map $ \C \simeq \C \ot \tu \to \C \ot \mB(\Y,\Y)$ is an equivalence.
\end{lemma}

\begin{proof}
There is a canonical equivalence $$\Fun(\mB,\mC)(\C \ot \mB(\Y,-), \rH) \simeq 
\Fun(\mB,\mS)(\mB(\Y,-), \mC(\C,-) \circ \rH) \simeq \mC(\C,\rH(\Y)).$$	
\end{proof}

\begin{lemma}\label{Gen}
	
Let $\mB$ be a small $\infty$-category and $\mC$ an $\infty$-category that admits small colimits. Then $\Fun(\mB,\mC)$ is generated under small colimits by the objects $\C \ot \mB(\Y,-)$ for $\C \in \mC,\Y \in \mB.$	
	
\end{lemma}

\begin{proof}
We first prove that $\Fun(\mB^\simeq,\mC)$ is generated under small colimits by the functors $\C \ot \mB^\simeq(\Y,-): \mB^\simeq \to \mC$ for $\C \in \mC,\Y \in \mB:$ for any functor $\rH: \mB^\simeq \to \mC$ there is an equivalence 
$\rH \simeq \colim_{\Z \in \mB^\simeq} \rH(\Z)\ot \mB^\simeq(\Z,-)$ represented by the equivalence $$ \Fun(\mB^\simeq,\mC)(\rH, \rH') \simeq \lim_{\Z \in \mB^\simeq}\mC(\rH(\Z), \rH'(\Z)) \simeq \lim_{\Z \in \mB^\simeq}\Fun(\mB^\simeq,\mC)(\rH(\Z)\ot \mB^\simeq(\Z,-), \rH')$$$$ \simeq \Fun(\mB^\simeq,\mC)(\colim_{\Z \in \mB^\simeq} \rH(\Z)\ot \mB^\simeq(\Z,-), \rH')$$
for $\rH' \in \Fun(\mB,\mC),$ where the middle equivalence is by Lemma \ref{lan}.

Therefore the functor $\Fun(\mB,\mC) \to \Fun(\mB^\simeq,\mC)$ restricting along
$\mB^\simeq \subset \mB$ admits a left adjoint that sends the functor
$\mB^\simeq \to \mC, \C \ot \mB^\simeq(\Y,-)$ to the functor $\mB \to \mC, \C \ot \mB(\Y,-)$ using Lemma \ref{lan}.
The functor $\Fun(\mB,\mC) \to \Fun(\mB^\simeq,\mC)$ preserves small colimits as such are computed object-wise and so is monadic by \cite[Theorem 4.7.3.5]{lurie.higheralgebra}.
Consequently, $\Fun(\mB,\mC)$ is generated under small colimits by the essential image of the free functor so that the claim follows. 	
	
\end{proof}

\begin{notation}\label{Nott}Let $1 \leq \bk \leq \infty$, let $\mB$ a small $\bE_\bk$-monoidal $\infty$-category and $\mC$ a presentably $\bE_\bk$-monoidal $\infty$-category.
We write $$\lambda: \mC \simeq \Fun(\ast,\mC) \to \Fun(\mB, \mC)$$ for the $\bE_\bk$-monoidal functor induced by the $\bE_\bk$-monoidal functor
$* \to \mB$ (Remark \ref{rem1}).
We write
$$\mK: \mB^\op \xrightarrow{\y_{\mB^\op}} \Fun(\mB,\mS) \to \Fun(\mB, \mC) $$ for the composition of the  $\bE_\bk$-monoidal Yoneda-embedding and the $\bE_\bk$-monoidal functor induced by the unique left adjoint $\bE_\bk$-monoidal functor $(-) \ot \tu_\mC: \mS \to \mC$ (Remark \ref{rem1}).
\end{notation}

\begin{proposition}\label{univcell}Let $1 \leq \bk \leq \infty$, $\mB$ a small left rigid $\bE_\bk$-monoidal $\infty$-category and $\mC$ a stable $\bE_\bk$-monoidal $\infty$-category compatible with small colimits that is generated under arbitrary shifts and small colimits by the tensor unit. 
Then $$ \Cell_{\mK}(\Fun(\mB, \mC)) = \Fun(\mB, \mC).$$

\end{proposition}

\begin{proof}
By Lemma \ref{Gen} the $\infty$-category $\Fun(\mB,\mC)$ is generated under small colimits by the objects $\C \ot \mB(\Y,-)$ for $\C \in \mC,\Y \in \mB.$	
Since the functor $\mC \to \Fun(\mB,\mC), \C \mapsto \C \ot \mB(\Y,-)$ preserves small colimits and $\mC$ is generated under arbitrary shifts and small colimits by the tensor unit, $\Fun(\mB,\mC)$ is generated under arbitrary shifts and small colimits by the objects $\mK(\Y)= \tu \ot \mB(\Y,-)$ for $\Y \in \mB.$	

\end{proof}

We use the following lemma in the next section in the proof of Theorem \ref{tre}:

\begin{lemma}\label{ident}
Let $1 \leq \bk \leq \infty$, let $\mB$ be a small $\bE_\bk$-monoidal $\infty$-category and $\mC$ an $\bE_\bk$-monoidal $\infty$-category compatible with small colimits.
Let $\Z \in \mB$ be an object that admits a left dual $\Z^\dual$ in $\mB.$
The $\Fun(\mB, \mC)$-linear functor 
$$\mK(\Z^\dual)\ot (-): \Fun(\mB, \mC) \to \Fun(\mB, \mC)$$ is equivalent to the functor
$(\Z \ot (-))^*: \Fun(\mB, \mC) \to \Fun(\mB, \mC)$ precomposing with $\Z \ot (-): \mB \to \mB$.

\end{lemma}

\begin{proof}
Let $\lan_\Z: \mC \to \Fun(\mB,\mC)$ be the left adjoint of evaluation at $\Z \in \mB.$
By Lemma \ref{lan} there is a canonical equivalence $\mK(\Z)=\mB(\Z,-)\ot \tu_\mC \simeq \lan_\Z(\tu_\mC).$

For any functor $\F: \mB \to \mD$ there is a canonical equivalence 
$$\lan_\Z(\tu_\mC) \ot \F = \lan_{\ot_{\mB}}(\ot_\mC \circ (\lan_\Z(\tu_\mC) \times \F)) \simeq $$$$\lan_{\ot_{\mB}} (\lan_{(\tu_\mB \times \Z)}(\ot_\mC \circ (\tu_\mC \times \F)))$$$$ \simeq \lan_{\ot_{\mB} \circ (\tu_\mB \times \Z)}(\ot_\mC \circ (\tu_\mC \times \F))\simeq \lan_{\Z \ot (-)}(\F),$$
which is natural in $\F \in \Fun(\mB, \mD).$
Hence for any $\Z \in \mC$ there is an equivalence $$\mK(\Z) \ot (-) \simeq \lan_{\Z \ot (-)}.$$

So there is an equivalence $\mK(\Z^\dual) \ot (-) \simeq \lan_{\Z^\dual \ot (-)}.$

The adjunction $\Z^\dual \ot (-): \mB \rightleftarrows \mB: \Z \ot (-)$
gives rise to an adjunction $$(\Z \ot (-))^*: \Fun(\mB, \mC) \rightleftarrows\Fun(\mB, \mC): (\Z^\dual \ot (-))^*,$$
which identifies $(\Z \ot (-))^*$ with $\lan_{\Z^\dual \ot (-)}.$	
\end{proof}

Next we will prove another universal property of the Day-convolution (Proposition \ref{univ}), which we combine with Proposition \ref{univcell} to relate $\X$-cellular objects in any presentably $\bE_\bk$-monoidal $\infty$-category $\mD$, where $\X: \mB^\op \to \mD$ is an $\bE_\bk$-monoidal functor, to modules in $\Fun(\mB,\mC)$ (Corollary \ref{tr}).

\vspace{1mm}

\begin{notation}\label{laxhom}Let $1 \leq \bk \leq \infty$, let $\mC$ be a presentably $\bE_{\bk+1}$-monoidal $\infty$-category and $\mD$ an $\bE_\bk$-monoidal
$\infty$-category left tensored over $\mC$ compatible with small colimits.	
The composition $\mC^\op \times \mD^\op \times \mD \xrightarrow{\ot \times \id} \mD^\op \times \mD \xrightarrow{\mD(-,-)} \mS$ of lax $\bE_\bk$-monoidal functors corresponds to a lax $\bE_\bk$-monoidal functor
$ \mD^\op \times \mD \to \Fun(\mC^\op,\mS)$ (Proposition \ref{Day1}),
which takes values in $\mC$, and so induces a lax $\bE_\bk$-monoidal functor
$$[-,-]_\mD: \mD^\op \times \mD \to \mC.$$

\end{notation}

\begin{remark}\label{RemA}
The functor $[-, \tu_\mD]_\mD: \mD^\op \to \mC $ refines to a lax $\bE_\bk$-monoidal functor because the tensor unit $\tu_\mD$ of $\mD$ is an $\bE_\bk$-algebra.
\end{remark}

\begin{proposition}\label{univ}
Let $0 \leq \bk \leq \infty$, let $\mB$ be a small $\bE_\bk$-monoidal $\infty$-category, $\mC$ a presentably $\bE_{\bk+1}$-monoidal $\infty$-category
and $\mD$ an $\bE_\bk$-monoidal $\infty$-category left tensored over $\mC$ compatible with small colimits.
The $\bE_\bk$-monoidal $\infty$-category $\Fun(\mB, \mC)$ carries the structure of an $\bE_\bk$-monoidal $\infty$-category left tensored over $\mC$ compatible with small colimits such that the $\bE_\bk$-monoidal functor $\mK: \mB^\op \to \Fun(\mB, \mC)$ induces an equivalence
\begin{equation}\label{equj}
\Fun_{\mC}^{\ot, \bE_\bk, \L}{(\Fun(\mB, \mC),\mD}) \to \Fun^{\ot, \bE_\bk}{(\mB^\op ,\mD}), \end{equation}  where the left hand side are left adjoint $\bE_\bk$-monoidal $\mC$-linear functors.

\end{proposition}

\begin{proof} \cite[Proposition 2.4.2.5]{lurie.higheralgebra} implies that $\bE_\bk$-monoidal $\infty$-categories and $\bE_\bk$-monoidal functors are classified by $\bE_{\bk}$-algebras and maps of $\bE_{\bk}$-algebras in $\Cat_{\infty}$ with respect to the cartesian structure.
By \cite[Corollary 4.8.1.4, Remark 4.8.1.6]{lurie.higheralgebra} the subcategory $\Cat_{\infty}^{\rc\rc} \subset \widehat{\Cat}_{\infty}$
of $\infty$-categories having small colimits and functors preserving small colimits
inherits a closed symmetric monoidal structure such that the inclusion $ \Cat^{\rc\rc}_\infty \subset \widehat{\Cat}_\infty$ is lax symmetric monoidal.
The resulting inclusion $\Alg_{\bE_{\bk}}(\Cat_{\infty}^{\rc\rc}) \subset\Alg_{\bE_{\bk}}(\widehat{\Cat}_\infty)$ identifies the image as
the subcategory of $\bE_\bk$-monoidal $\infty$-categories compatible with small colimits and $\bE_\bk$-monoidal functors preserving small colimits.

By \cite[Theorem 5.1.3.2]{lurie.higheralgebra} for any $\bE_{\bk+1}$-monoidal $\infty$-category $\mC$ compatible with small colimits identified with an $\bE_{\bk+1}$-algebra in $\Cat^{\rc\rc}_\infty$
the $\infty$-categories $\RMod_\mC(\widehat{\Cat}_{\infty})$
of right modules in $\widehat{\Cat}_{\infty} $ and $\RMod_\mC(\Cat^{\rc\rc}_{\infty})$
of right modules in $\Cat^{\rc\rc}_{\infty} $ carry induced $\bE_\bk$-monoidal structures.
\cite[Proposition 2.4.2.5]{lurie.higheralgebra} implies that $\bE_\bk$-algebras  and maps of $\bE_\bk$-algebras in $\RMod_\mC(\widehat{\Cat}_{\infty})$
are classified by $\bE_\bk$-monoidal $\infty$-categories left tensored over $\mC$
and $\bE_\bk$-monoidal $\mC$-linear functors.
The subcategory $\Alg_{\bE_{\bk}}(\RMod_\mC(\Cat^{\rc\rc}_{\infty})) \subset \Alg_{\bE_{\bk}}(\RMod_\mC(\widehat{\Cat}_{\infty}))$ 
identifies with the subcategory of $\bE_\bk$-monoidal $\infty$-categories left tensored over $\mC$ compatible with small colimits and $\bE_\bk$-monoidal $\mC$-linear functors preserving small colimits.
 
By \cite[Remark 4.8.1.8]{lurie.higheralgebra} the lax symmetric monoidal inclusion $\Cat^{\rc\rc}_\infty \subset \widehat{\Cat}_\infty$ admits a symmetric monoidal left adjoint, which sends a small $\infty$-category $\mA$ to $\Fun(\mA^\op, \mS)$ \cite[Theorem 5.1.5.6]{lurie.HTT}. 
So there is an induced adjunction
\begin{equation}\label{aqq}
\Alg_{\bE_\bk}(\widehat{\Cat}_\infty) \rightleftarrows \Alg_{\bE_\bk}(\Cat^{\rc\rc}_\infty).\end{equation}
By Recollection \ref{Day2} 4. the left adjoint sends a small
$\bE_\bk$-monoidal $\infty$-category $\mA$ to the Day-convolution on $\Fun(\mA^\op, \mS)$.

The free-forgetful adjunction $(-)\ot \mC:\Cat^{\rc\rc}_\infty \rightleftarrows \RMod_\mC(\Cat^{\rc\rc}_\infty) $, where the left adjoint is $\bE_\bk$-monoidal and the right adjoint is lax $\bE_\bk$-monoidal \cite[Theorem 5.1.3.2]{lurie.higheralgebra}, gives rise to an adjunction 
$$(-)\ot \mC: \Alg_{\bE_\bk}(\Cat^{\rc\rc}_\infty)\rightleftarrows \Alg_{\bE_\bk}(\RMod_\mC(\Cat^{\rc\rc}_\infty)).$$
So we obtain an adjunction $$\Alg_{\bE_\bk}(\widehat{\Cat}_\infty) \rightleftarrows \Alg_{\bE_\bk}(\Cat^{\rc\rc}_\infty) \rightleftarrows \Alg_{\bE_\bk}(\RMod_\mC(\Cat^{\rc\rc}_\infty)).$$
Hence for $\mD \in \Alg_{\bE_\bk}(\RMod_\mC(\Cat^{\rc\rc}_\infty))$ there is a canonical equivalence $$ \Fun_{\mC}^{\ot,\bE_\bk, \L}{(\Fun(\mB, \mS) \otimes \mC,\mD}) \simeq \Fun{(\mB^\op ,\mD}).$$ 
It remains to identify the $\bE_\bk$-monoidal $\infty$-categories $\Fun(\mB, \mS) \otimes \mC$ and $\Fun(\mB,\mC).$
Since $\mC$ is a presentably $\bE_\bk$-monoidal $\infty$-category, by \cite[Proposition 7.15]{https://doi.org/10.48550/arxiv.2006.08269}
there is a regular cardinal $\kappa$, a small $\bE_\bk$-monoidal $\infty$-category
$\mC'$ compatible with $\kappa$-small colimits and an $\bE_\bk$-monoidal equivalence $\mC \simeq \Ind_\kappa(\mC'),$
where $\Ind_\kappa(\mC') \subset \Fun(\mC'^\op, \mS)$ is the full reflective subcategory of presheaves preserving $\kappa$-small limits
and the $\bE_\bk$-monoidal structure on $\Ind_\kappa(\mC')$ is an
$\bE_\bk$-monoidal localization of the Day-convolution.
As a consequence of \cite[Proposition 4.8.1.17]{lurie.higheralgebra} the $\bE_\bk$-monoidal localization $\Fun(\mC'^\op, \mS) \to \mC$ induces an 
$\bE_\bk$-monoidal localization $\Fun(\mB, \mS) \otimes \Fun(\mC'^\op, \mS) \to \Fun(\mB, \mS) \otimes\mC $.
Since the left adjoint of adjunction (\ref{aqq}) canonically refines to a symmetric monoidal functor, there is an $\bE_\bk$-monoidal equivalence $$\lambda: \Fun(\mB, \mS) \otimes \Fun(\mC'^\op, \mS) \simeq \Fun(\mB \times \mC'^\op,\mS),$$ under which $\Fun(\mB, \mS) \otimes\mC $
corresponds to the full subcategory $\Fun(\mB \times \mC'^\op,\mS)'$ of 
presheaves preserving $\kappa$-small limits in the second variable.
Consequently, $\Fun(\mB \times \mC'^\op,\mS)'$ is an $\bE_\bk$-monoidal localization of
$\Fun(\mB \times \mC'^\op,\mS)$ and the $\bE_\bk$-monoidal equivalence 
$\lambda$ restricts to an $\bE_\bk$-monoidal equivalence $$\Fun(\mB, \mS) \otimes \mC \simeq \Fun(\mB \times \mC'^\op,\mS)'.$$
By Recollection \ref{Day2} for any $\bE_\bk$-monoidal $\infty$-category $\mA$ there is an equivalence
$$ \Fun^{\ot, \bE_\bk, \lax}(\mA,\Fun(\mB \times \mC'^\op,\mS)) \simeq
\Fun^{\ot, \bE_\bk, \lax}(\mA \times \mB \times \mC'^\op, \mS) $$$$ \simeq 
\Fun^{\ot, \bE_\bk, \lax}(\mA \times \mB,\Fun(\mC'^\op,\mS))$$
that restricts to an equivalence 
$ \Fun^{\ot, \bE_\bk, \lax}(\mA,\Fun(\mB, \mS) \otimes \mC) \simeq $$$ \Fun^{\ot, \bE_\bk, \lax}(\mA,\Fun(\mB \times \mC'^\op,\mS)') \simeq
\Fun^{\ot, \bE_\bk, \lax}(\mA \times \mB,\mC).$$

\end{proof}

\begin{remark}\label{laxhom2}
Let $1 \leq \bk \leq \infty$, let $\mB$ be a small $\bE_\bk$-monoidal $\infty$-category and $\mD$ a locally small $\bE_\bk$-monoidal $\infty$-category compatible with small colimits.

\begin{enumerate}
\item For any $\bE_\bk$-monoidal functor $\psi: \mB \to \mD$ the lax $\bE_\bk$-monoidal right adjoint $ \gamma : \mD \to \Fun(\mB^\op, \mS) $
corresponding to $\psi$ under equivalence (\ref{ujn}) corresponds 
under equivalence (\ref{equs}) to the lax $\bE_\bk$-monoidal functor $$ \mB^\op \times \mD \xrightarrow{\psi^\op \times \mD} \mD^\op \times \mD \xrightarrow{\mD(-,-)} \mS,$$
where $\mD^\op \times \mD \xrightarrow{\mD(-,-)} \mS$ corresponds to the $\bE_\bk$-monoidal Yoneda-embedding. 

\item For $\psi=\y_\mB$ 
we find that $\gamma$ is the identity.
Thus the lax $\bE_\bk$-monoidal evaluation functor $ \mB^\op \times \Fun(\mB^\op, \mS) \to \mS$ factors as lax $\bE_\bk$-monoidal functors
$$ \mB^\op \times \Fun(\mB^\op, \mS) \xrightarrow{\y_\mB^\op \times \id} \Fun(\mB^\op, \mS)^\op \times \Fun(\mB^\op, \mS) \xrightarrow{\Fun(\mB^\op, \mS)(-,-)} \mS.$$

\item Let $\F: \mA \to \mB$ be an $\bE_\bk$-monoidal functor.
Taking $\psi$ to be the composition $\y_\mB \circ \F: \mA \to \mB \to \Fun(\mB^\op, \mS)$ we find that $\F$ extends to a left adjoint $\bE_\bk$-monoidal functor $\bar{\F}: \Fun(\mA^\op,\mS) \to \Fun(\mB^\op,\mS)$ along the Yoneda-embeddings. 
As a consequence of Remark \ref{laxhom2} 2. the lax $\bE_\bk$-monoidal right adjoint $\F^*$ of $\bar{\F}$ corresponds to the lax $\bE_\bk$-monoidal functor 
$$ \mA^\op \times \Fun(\mB^\op,\mS) \xrightarrow{\F^\op \times \id} \mB^\op \times \Fun(\mB^\op,\mS) \xrightarrow{\ev} \mS.$$
Thus the map $$\y_\mA \to \F^* \circ \bar{\F} \circ \y_\mC \simeq \F^* \circ \y_\mD \circ \F$$ in $\Fun^{\ot, \lax, \bE_\bk}(\mA,\Fun(\mA^\op, \mS))$ corresponds to a map $$\theta: \mA(-,-) \to \mB(-,-) \circ (\F^\op \times \F)$$ in $\Fun^{\ot, \lax, \bE_\bk}(\mA^\op \times \mA, \mS).$
\end{enumerate}
\end{remark}

\begin{proposition}\label{univ2}
Let $0 \leq \bk \leq \infty$, let $\mB$ be a small $\bE_\bk$-monoidal $\infty$-category, $\mC$ a presentably $\bE_{\bk+1}$-monoidal $\infty$-category
and $\mD$ an $\bE_\bk$-monoidal $\infty$-category left tensored over $\mC$ compatible with small colimits.
For any $\bE_\bk$-monoidal functor $\psi: \mB^\op \to \mD$ the lax $\bE_\bk$-monoidal right adjoint $ \gamma : \mD \to \Fun(\mB, \mC) $ 
corresponding to $\psi$ under equivalence (\ref{equj}) corresponds to the lax $\bE_\bk$-monoidal functor $$\rho: \mB \times \mD \xrightarrow{ \psi^\op \times \mD} \mD^\op \times \mD \xrightarrow{[-,-]_\mD} \mC. $$ 

\end{proposition}

\begin{proof}
Let $\Psi: \Fun(\mB, \mC) \to \mD $ be the left adjoint 
$\bE_\bk$-monoidal $\mC$-linear functor corresponding to $\psi: \mB^\op \to \mD$
and $\gamma: \mD \to \Fun(\mB, \mC)$ the lax $\bE_\bk$-monoidal right adjoint.
Since $\Psi$ is $\bE_\bk$-monoidally left adjoint to $\gamma$, the lax $\bE_\bk$-monoidal functor $$\Fun(\mB, \mC)^\op \times \mD
\xrightarrow{\id \times \gamma} \Fun(\mB, \mC)^\op \times \Fun(\mB, \mC) \xrightarrow{[-,-]_{\Fun(\mB, \mC)}} \mC.$$
identifies with the lax $\bE_\bk$-monoidal functor $$ \Fun(\mB, \mC)^\op \times \mD
\xrightarrow{\Psi^\op \times \id} \mD^\op \times \mD \xrightarrow{[-,-]_\mD} \mC.$$
Since $\psi \simeq \Psi \circ \mK$ the lax $\bE_\bk$-monoidal functor $\rho$ identifies with $$\mB \times \mD\xrightarrow{\mK^\op \times \id} \Fun(\mB, \mC)^\op \times \mD
\xrightarrow{\Psi^\op \times \id} \mD^\op \times \mD \xrightarrow{[-,-]_\mD} \mC.$$ The latter identifies with the lax $\bE_\bk$-monoidal functor $$\mB \times \mD\xrightarrow{\mK^\op \times \id} \Fun(\mB, \mC)^\op \times \mD
\xrightarrow{\id \times \gamma} \Fun(\mB, \mC)^\op \times \Fun(\mB, \mC) \xrightarrow{[-,-]_{\Fun(\mB, \mC)}} \mC,$$
which is $$\mB \times \mD\xrightarrow{\id \times \gamma} \mB \times \Fun(\mB, \mC)
\xrightarrow{\mK^\op \times \id} \Fun(\mB, \mC)^\op \times \Fun(\mB, \mC) \xrightarrow{[-,-]_{\Fun(\mB, \mC)}} \mC.$$
So it remains to see that the latter identifies with the lax $\bE_\bk$-monoidal functor $\mB \times \mD
\xrightarrow{\id \times \gamma} \mB \times \Fun(\mB, \mC) \to \mC$
corresponding to $\gamma$. For that it is enough to see that the lax $\bE_\bk$-monoidal functor 
$$\xi: \mB \times \Fun(\mB, \mC)
\xrightarrow{\mK^\op \times \id} \Fun(\mB, \mC)^\op \times \Fun(\mB, \mC) \xrightarrow{[-,-]_{\Fun(\mB, \mC)}} \mC \subset \Fun(\mC^\op, \mS)$$ identifies with the lax $\bE_\bk$-monoidal functor $ \mB \times \Fun(\mB, \mC) \xrightarrow{\ev} \mC \subset \Fun(\mC^\op, \mS).$
The latter is adjoint to the lax $\bE_\bk$-monoidal functor
$$\kappa: \mC^\op \times \mB \times \Fun(\mB, \mC)\xrightarrow{\id \times \ev} \mC^\op \times \mC\xrightarrow{\mC(-,-)} \mS.$$
We will prove that $\xi$ also is adjoint to $\kappa.$

The lax $\bE_\bk$-monoidal functor $\xi$ is adjoint to the lax $\bE_\bk$-monoidal functor
$$ \mB \times \mC^\op \times \Fun(\mB, \mC)
\xrightarrow{\y_{\mB^\op}^\op \times \id} \Fun(\mB, \mS)^\op \times \mC^\op \times \Fun(\mB, \mC) \xrightarrow{(\tu_\mC \ot (-))_* \times \id}$$$$\Fun(\mB, \mC)^\op \times \mC^\op \times \Fun(\mB, \mC)\xrightarrow{\id \times (-)^{(-)}} \Fun(\mB, \mC)^\op \times \Fun(\mB, \mC)\xrightarrow{\Fun(\mB, \mC)(-,-)} \mS,$$
which identifies with the lax $\bE_\bk$-monoidal functor
$$\mB \times \mC^\op \times \Fun(\mB, \mC)
\xrightarrow{\y_{\mB^\op}^\op \times \id} \Fun(\mB, \mS)^\op \times \mC^\op \times \Fun(\mB, \mC) \xrightarrow{\id \times (-)^{(-)}}$$$$ \Fun(\mB, \mS)^\op \times \Fun(\mB, \mC)\xrightarrow{\id \times \mC(\tu_\mC,-)_*}\Fun(\mB, \mS)^\op \times \Fun(\mB, \mS)\xrightarrow{\Fun(\mB, \mS)(-,-)} \mS.$$
By Remark \ref{laxhom2} 2. the latter identifies with the lax $\bE_\bk$-monoidal functor
$$\mB \times \mC^\op \times \Fun(\mB, \mC) \xrightarrow{\id \times  (\mC(\tu_\mC,-)_* \circ (-)^{(-)})} \mB \times \Fun(\mB, \mS) \xrightarrow{\ev} \mS,$$
which is $\kappa: \mC^\op \times \mB \times \Fun(\mB, \mC)\xrightarrow{\id \times \ev} \mC^\op \times \mC \xrightarrow{(-)^{(-)}} \mC \xrightarrow{\mC(\tu,-)} \mS.$

\end{proof}

\begin{corollary}\label{tr} 
Let $0 \leq \bk \leq \infty$, let $\mB$ be a small left rigid $\bE_\bk$-monoidal $\infty$-category, $\mC$ a stable presentably $\bE_{\bk+1}$-monoidal $\infty$-category
that is generated under arbitrary shifts and small colimits by the tensor unit,
$\mD$ a stable $\bE_\bk$-monoidal $\infty$-category left tensored over $\mC$ compatible with small colimits and $\X: \mB^\op \to \mD$ an $\bE_\bk$-monoidal functor.
The unique left adjoint $\bE_\bk$-monoidal $\mC$-linear functor $\Fun(\mB,\mC) \to \mD$
extending $\X$ (Proposition \ref{univ} 1.) lands in $\Cell_{\X}(\mD).$
	
\end{corollary}

\begin{proof}
Let $\phi: \Fun(\mB,\mC) \to \mD$ be the unique left adjoint $\bE_\bk$-monoidal
$\mC$-linear functor extending $\X$ of Proposition \ref{univ} 1.
Then $\phi \circ \mK \simeq \X$ so that $\phi$ restricts to a functor
$\Cell_{\mK}(\Fun(\mB, \mC)) \to \Cell_\X(\mD).$
So we conclude via Proposition \ref{univcell} that states $ \Cell_{\mK}(\Fun(\mB, \mC)) = \Fun(\mB, \mC)$.
	
\end{proof}

\section{A model for cellular objects}\label{Pres}

In the following we will prove Theorem \ref{tre} presenting cellular objects as modules. We deduce Theorem \ref{tre} from Propositions \ref{tr} and \ref{tt}.
Proposition \ref{tt} is a consequence of Lemma \ref{qq}, which 
studies the unit of an induced adjunction on $\infty$-categories of modules
when evaluated at free modules on dualizable objects. 

We fix the following notation and terminology of \cite[Definition 4.2.1.13]{lurie.higheralgebra}:

\begin{notation}
Let $\mC$ be a monoidal $\infty$-category and $\A$ an $\bE_1$-algebra in $\mC.$
Let $\RMod_\A(\mC)$ be the $\infty$-category of right $\A$-modules in $\mC$
\cite[Definition 4.2.1.13, 4.2.2.10]{lurie.higheralgebra}.

\end{notation}

We will use the following facts about $\infty$-categories of modules:

\begin{notation}\label{Notil}Let $\mC$ be a monoidal $\infty$-category and $\f: \A \to \B$ a map of $\bE_1$-algebras of $\mC$. 
	
\begin{itemize}
\item There is a left $\mC$-action on $\RMod_\A(\mC)$ and a $\mC$-linear 
forgetful functor $\RMod_\A(\mC) \to \mC$ \cite[Construction 4.8.3.24]{lurie.higheralgebra}, which is an equivalence if $\A$ is the tensor unit \cite[Proposition 4.2.4.9]{lurie.higheralgebra}.

\item There is a $\mC$-linear forgetful functor $\f^*: \RMod_{\B}(\mC) \to \RMod_{\A}(\mC) $
that has a $\mC$-linear left adjoint $$ \B \underset{\A}{\ot } (-) : \RMod_{\A}(\mC) \to  \RMod_{\B}(\mC)$$ if $\mC$ admits geometric realizations 
\cite[Proposition 4.6.2.17]{lurie.higheralgebra}.

\item Every lax monoidal functor $\F: \mC \to \mD$ gives rise to a functor $\RMod_\A(\mC) \to \RMod_{\F(\A)}(\mD).$

\item If $\F$ is monoidal, the functor $\RMod_\A(\mC) \to \RMod_{\F(\A)}(\mD)$ is $\mC$-linear, where we view $\RMod_{\F(\A)}(\mD)$ as left tensored over $\mC$ by pulling back along $\F$
\cite[Construction 4.8.3.24]{lurie.higheralgebra}.

\item If $\F$ is monoidal and admits a right adjoint $\G: \mD \to \mC$,
which inherits a canonical structure of a lax monoidal functor,
the $\mC$-linear functor $\RMod_\A(\mC) \to \RMod_{\F(\A)}(\mD)$
admits a right adjoint, that factors as $\RMod_{\F(\A)}(\mD) \to \RMod_{\G(\F(\A))}(\mC) \to \RMod_\A(\mC).$

\end{itemize}

\end{notation}
\vspace{1mm}

The next Lemma \ref{qq} is a key ingredient in the proof of Proposition \ref{tt}. 
  
\begin{lemma}\label{qq}

Let $\phi: \mC \to \mD $ be a monoidal functor that has a right adjoint  $\gamma: \mD \to \mC$ such that $ \mD$ admits geometric realizations.
Let $\A $ be an  $\bE_1$-algebra of $\mC$ and $\f: \phi(\A) \to \B$ a map of $\bE_1$-algebras of $\mD$ adjoint to a map $\g: \A \to \gamma(\B) $ of $\bE_1$-algebras of $\mC.$
Let $ \Phi :\RMod_{\A}(\mC) \to \RMod_{\phi (\A)}(\mD) $ be the induced functor on left modules and $ \Gamma$ the right adjoint. 
Let $\eta$  be the unit of the induced adjunction 
$$ \RMod_{\A}(\mC) \xrightarrow{\Phi }   \RMod_{\phi (\A)} (\mD)  \xrightarrow{\B \underset{ \phi (\A)}{\ot } (-)} \RMod_{\B} ( \mD). $$

Then $\g: \A \to \gamma(\B) $ is an equivalence if and only if $\eta_\X $ is an equivalence for every free $\A$-module $\X$ on a left dualizable object of $\mC.$ 

\end{lemma}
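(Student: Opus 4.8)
The plan is to reduce the whole biconditional to a statement about a single morphism in $\mC$. Write $\theta := (\B \ot_{\phi(\A)} -) \circ \Phi : \Mod_\A(\mC) \to \Mod_\B(\mD)$ for the composite left adjoint, so that $\eta$ is the unit of $\theta \dashv \Gamma'$, where $\Gamma'$ is the composite of $\Gamma$ with restriction of scalars along $\f$. Denote by $\U_\A : \Mod_\A(\mC) \to \mC$ and $\U_\B : \Mod_\B(\mD) \to \mD$ the forgetful functors; both are conservative, and since $\Phi$ and $\Gamma$ act on underlying objects through $\phi$ and $\gamma$ while extension and restriction of scalars leave underlying objects unchanged, one has $\U_\B \theta \simeq \phi \, \U_\A$ and $\U_\A \Gamma' \simeq \gamma \, \U_\B$. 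Because $\U_\A$ is conservative, $\eta_\X$ is an equivalence if and only if $\U_\A(\eta_\X)$ is an equivalence in $\mC$.

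Next I would identify this underlying map on a free module $\X = \A \ot \Y$ with $\Y$ right-dualizable. Since $\phi$ is monoidal, $\Phi(\A \ot \Y) \simeq \phi(\A) \ot \phi(\Y)$ is the free $\phi(\A)$-module on $\phi(\Y)$, and extending scalars along $\f$ gives $\theta(\A \ot \Y) \simeq \B \ot \phi(\Y)$, the free $\B$-module on $\phi(\Y)$. Writing the composite unit in the standard way as the pasting of the unit of $\Phi \dashv \Gamma$ with the image under $\Gamma$ of the unit of extension $\dashv$ restriction, and using that on underlying objects the former is the unit $\eta^{\phi}$ of $\phi \dashv \gamma$ while the latter, on the free $\phi(\A)$-module $\phi(\A)\ot\phi(\Y)$, is $\f \ot \id_{\phi(\Y)}$, I obtain
$$ \U_\A(\eta_{\A \ot \Y}) \;\simeq\; \gamma(\f \ot \id_{\phi(\Y)}) \circ \eta^{\phi}_{\A \ot \Y} : \A \ot \Y \longrightarrow \gamma(\B \ot \phi(\Y)). $$
Taking $\Y = \tu_\mC$ (right-dualizable, being self-dual) collapses $\phi(\tu_\mC)\simeq \tu_\mD$ and the free module to $\A$, so this map becomes $\gamma(\f)\circ\eta^{\phi}_\A = \g$. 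Hence $\eta_\A$ is an equivalence exactly when $\g$ is, which settles the ``if'' direction: assuming $\eta_\X$ is an equivalence for every free module on a right-dualizable object, apply this to $\X = \A$.

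For the converse I would invoke a projection formula. For right-dualizable $\Y$ with right dual $\Y^\dual$, the natural projection $\pi_\Y : \gamma(\B) \ot \Y \to \gamma(\B \ot \phi(\Y))$, adjoint to $\phi(\gamma(\B)\ot\Y)\simeq\phi\gamma(\B)\ot\phi(\Y)\xrightarrow{\epsilon\ot\id}\B\ot\phi(\Y)$, is an equivalence. This is the one non-formal step; I would prove it by Yoneda, mapping in an arbitrary $\Z\in\mC$ and using $-\ot\Y^\dual\dashv-\ot\Y$ in $\mC$, that $\phi$ sends $\Y^\dual$ to a right dual $\phi(\Y)^\dual\simeq\phi(\Y^\dual)$ so that $-\ot\phi(\Y^\dual)\dashv-\ot\phi(\Y)$ in $\mD$, and $\phi\dashv\gamma$, to identify both $\mC(\Z,\gamma(\B)\ot\Y)$ and $\mC(\Z,\gamma(\B\ot\phi(\Y)))$ with $\mD(\phi(\Z)\ot\phi(\Y^\dual),\B)$ compatibly with $\pi_\Y$. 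A short chase, combining naturality of the projection in the $\mD$-variable along $\f$ with the compatibility $\eta^{\phi}_{\A\ot\Y}\simeq\pi^{\phi(\A)}_\Y\circ(\eta^{\phi}_\A\ot\id_\Y)$ of the unit with the projection, then rewrites the previous display as
$$ \U_\A(\eta_{\A \ot \Y}) \;\simeq\; \pi_\Y \circ (\g \ot \id_\Y) : \A \ot \Y \longrightarrow \gamma(\B \ot \phi(\Y)). $$
Since $\pi_\Y$ is an equivalence, if $\g$ is an equivalence then so is $\g\ot\id_\Y$ and hence $\U_\A(\eta_{\A\ot\Y})$, whence $\eta_{\A\ot\Y}$ by conservativity of $\U_\A$.

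The formal bookkeeping (the composite-unit pasting and conservativity) is routine; the real content, and the part I expect to be the main obstacle, is proving that $\pi_\Y$ is an equivalence for right-dualizable $\Y$ and verifying the compatibility identifying $\U_\A(\eta_{\A\ot\Y})$ with $\g\ot\id_\Y$. Throughout, \emph{handedness} requires care: the free $\B$-module $\B\ot\phi(\Y)$ carries its generator on the right, which is precisely why right-dualizability of $\Y$ (rather than left-dualizability) is the correct hypothesis and why the lemma is phrased for free modules on right-dualizable objects.
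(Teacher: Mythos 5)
Your proposal is correct and takes essentially the same route as the paper: the paper's proof likewise shows that the underlying map of $\eta_{\A \ot \Y}$ is homotopic to $\g \ot \id_\Y$ postcomposed with the projection-formula equivalence (your $\pi_\Y$, the paper's $\beta_\B$), establishes that equivalence for right-dualizable $\Y$ by passing to right adjoints (mates) of the monoidality equivalence $\phi(-) \ot \phi(\Y^\dual) \simeq \phi(- \ot \Y^\dual)$ — which is the same content as your Yoneda argument — and obtains the converse direction by specializing to $\Y = \tu_\mC$. The only blemish is your asserted identification $\U_\B \theta \simeq \phi \, \U_\A$, which is false because extension of scalars does change underlying objects (the underlying object of $\theta(\M)$ is $\B \ot_{\phi(\A)} \phi(\M)$, not $\phi(\M)$), but since you never use it — only the correct $\U_\A \Gamma' \simeq \gamma \, \U_\B$ and the explicit computation of $\U_\A(\eta_{\A \ot \Y})$ enter — the argument stands.
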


 \begin{proof}
Let $\Y \in \mC$ be left dualizable and $\Y^\dual \in \mC$ the left dual of Y. 
Since left duals are preserved by monoidal functors, $\phi(\Y^\dual) $ is the left dual of $\phi(\Y) $. 
So we obtain adjunctions $$ \Y^\dual \ot (-): \mC \rightleftarrows \mC : \Y \ot (-), \ \phi(\Y^\dual)\ot (-) : \mD \rightleftarrows \mD : \phi(\Y) \ot(-).$$
The structure of a monoidal functor on $ \phi: \mC \to \mD $ provides an equivalence $$ \alpha: (\phi (\Y^\dual) \ot (-)) \circ \phi  \simeq \phi \circ (\Y^\dual \ot (-)) $$ in $\Fun(\mC, \mD), $ which induces an equivalence $$ \beta:  (\Y \ot (-)) \circ \gamma \simeq \gamma \circ (\phi(\Y) \ot (-)) $$ in $\Fun(\mD, \mC) $ between the right adjoints.
The functor $\beta $ is uniquely characterized by the property that there is a canonical commutative square 
$$
\begin{xy}
 \xymatrix{
\id_\mC  \ar[d] \ar[rr]
&&  \gamma \circ (\phi(\Y) \ot (-)) \circ (\phi (\Y^\dual) \ot (-)) \circ \phi  \ar[d]^\simeq
 \\
(\Y \ot (-)) \circ \gamma \circ \phi \circ (\Y^\dual \ot (-))
\ar[rr]^\simeq && \gamma \circ (\phi(\Y)\ot (-)) \circ \phi \circ (\Y^\dual \ot (-)) 
  }
\end{xy} 
$$ 
in $\Fun(\mC, \mC) $.
The canonical morphism $$ (\Y \ot (-)) \circ \gamma \to (\gamma \phi(\Y) \ot (-)) \circ \gamma  \to \gamma \circ (\phi(\Y) \ot (-)) $$ in $\Fun(\mD, \mC) $  also makes the last square commute and is thus homotopic to $ \beta. $

For any monoidal $\infty$-category $\mE$ and $\bE_1$-algebra $\Z$ in $\mE$ let $\F_\Z : \mE \rightleftarrows \RMod_{\Z}(\mE): \V_\Z$ be the free-forgetful adjunction.
The unit $\eta_{ \A \ot \Y} $ of the adjunction
$$ \RMod_{\A}(\mC) \xrightarrow{\Phi} \RMod_{\phi(\A)}(\mD)\xrightarrow{(-) \underset{\phi(\A)}{\ot}\B} \RMod_{\B}(\mD) $$ at $\Y \ot \A$ is the canonical morphism $ \Y \ot \A \to \Gamma \Phi(\Y \otimes \A) \to \Gamma \f^\ast(\Phi(\Y \otimes \A) \otimes_{{\phi}(\A)} \B). $ 

\vspace{1mm}

Thus the statement of the lemma follows from the commutativity of the following diagram in $\mC $ whose top horizontal morphism is the image of $\eta_{\Y \ot \A} $ in $\mC $ under the forgetful functor and whose bottom horizontal morphism is $\Y \ot \g: $ 
 
\medmuskip=0.0mu
 
\begin{footnotesize}
$$
\begin{xy}
\xymatrix@C-=0.5cm{
\V_\A \F_\A( \Y)
\ar[rr] 
\ar[rd]
\ar[ddddd]_/0em/\simeq
&&\V_\A \Gamma \Phi \F_\A( \Y)
\ar[r] 
\ar[d]^/-.3em/\simeq
&\V_\A \Gamma \f^\ast(\Phi\F_\A(\Y) \underset{ \phi (\A)}{\ot }\B)
\ar[d]^/-.3em/\simeq
\\
&\gamma \phi   \V_{\A} \F_\A(\Y)
\ar[r]^/-.3em/\simeq
\ar[ddd]^/-.3em/\simeq
&\gamma \V_{\phi(\A)} \Phi \F_\A(\Y)
\ar[r] 
\ar[d]^/-.3em/\simeq
&\gamma \V_{\phi(\A)} \f^\ast(\Phi\F_\A(\Y) \underset{\phi(\A)}{\ot}\B)
\ar[d]^/-.3em/\simeq
\\
&&\gamma \V_{\phi(\A)}\F_{\phi(\A)} \phi(\Y)
\ar[r] 
\ar[dd] ^/-.3em/\simeq 
&\gamma \V_{\phi(\A)} \f^\ast( \F_{\phi(\A)}\phi(\Y)\underset{\phi(\A)}{\ot}\B)
\ar[d]^/-.3em/\simeq
\\
&&&\gamma \V_{\B} \F_\B \phi(\Y)
\ar[d]^/-.3em/\simeq
\\
&\gamma  \phi (\Y \ot \A) 
\ar[r]^/-.3em/\simeq
&\gamma(\phi(\Y)  \otimes \phi(\A))
\ar[r]
&\gamma(\phi(\Y)\ot \B)
\\
\Y \otimes \A
\ar[rr]
\ar[ru]
&&\Y \ot \gamma(\phi(\A))
\ar[r]
\ar[u]^{\beta_{\phi(\A)}}_/-.3em/\simeq
&\Y \ot \gamma  (\B)
\ar[u]^{\beta_{\B}}_/-.3em/\simeq }
\end{xy}
$$
\end{footnotesize}
We will verify that the diagram commutes.
The commutativity of the top left square of the diagram says that the unit of the adjunction 
\begin{equation}\label{ad1}
\Phi : \RMod_{\A}(\mC)  \rightleftarrows \RMod_{\phi (\A)} ( \mD) : \Gamma \end{equation} lifts the unit of the adjunction \begin{equation}\label{ad2}\phi: \mC \rightleftarrows \mD : \gamma \end{equation} along the forgetful functors. This holds as the functors
$ \V_\A:\RMod_{\A}(\mC) \to \mC $ and  $ \V_{\phi(\A)}:\RMod_{\phi(\A)}(\mD) \to \mD $ are part of a map of adjunctions from adjunction
(\ref{ad1}) to adjunction (\ref{ad2}) by construction of the induced adjunction on modules. 

The bottom left square of the diagram commutes since the unit $ \id_\mC \to \gamma \circ \phi $ of adjunction (\ref{ad2}) refines to a monoidal natural transformation when $ \gamma $ inherits the structure of a lax monoidal functor from the monoidal left adjoint.
The right middle square (with five knots) commutes since the unit of the adjunction $$(-) \underset{\phi (\A)}{\ot}\B: \RMod_{\phi(\A)} (\mD)\rightleftarrows \RMod_{\B} (\mD): \f^* $$ at $\F_{\phi(\A)}(\phi(\Y))$
lies over the map $ \phi(\Y) \ot \f: \phi(\Y) \ot \phi(\A) \to \phi(\Y)\ot \B$ in $\mD.$

The middle square (with five knots) commutes because the equivalence
$$ \F_{\phi(\A)} \circ \phi  \simeq \Phi \circ \F_\A $$
of functors $ \mC \to \RMod_{\phi(\A)}(\mD)$ 
- induced by the equivalence $ \V_\A \circ \Gamma \simeq \gamma \circ \V_{\phi(\A)} $ between right adjoints - lifts the canonical equivalence of functors $ \mC \to \mD:$
$$ ((-) \ot \phi(\A)) \circ \phi \to \phi \circ ((-) \ot \A).$$
\end{proof}

\begin{proposition}\label{tt}
Let $\mC,\mD$ be stable monoidal $\infty$-categories compatible with small colimits and $\phi: \mC \to \mD $ be a monoidal functor that admits a right 
adjoint $\gamma: \mD \to \mC$.
Let $\A $ be an $\bE_1$-algebra of $\mC$ and $\f: \phi(\A) \to \B$ a map of $\bE_1$-algebras of $\mD$, whose adjoint map $\g: \A \to \gamma(\B) $ is an equivalence. 
Let $ \Phi : \RMod_{\A}(\mC) \to \RMod_{\phi (\A)}(\mD) $ be the induced functor on left modules. Let $\mC$ be generated under small colimits and arbitrary shifts by a collection $\mE $ of left dualizable compact objects such that $\phi$ sends every object of $\mE $ to a compact object. 
The functor $$(\B \underset{ \phi (\A)}{\ot } -) \circ \Phi:\RMod_{\A}(\mC) \to   \RMod_{\phi (\A)}(\mD) \to \RMod_{\B} (\mD) $$
is fully faithful and the essential image is generated under small colimits and arbitrary shifts by the free $\B$-modules on the images under $\phi$
of the objects of $\mE$. 

\end{proposition}

\begin{proof}

Because $\mC$ is generated under small colimits and arbitrary shifts by $\mE$ 
and $ \RMod_{\A}(\mC) $ is generated under small colimits by free $\A$-modules \cite[Example 4.7.2.7]{lurie.higheralgebra}, we find that
$ \RMod_{\A}(\mC) $ is generated under small colimits and arbitrary shifts 
by all free $\A$-modules on objects of $\mE.$

The free $\B$-module functor preserves compact objects because its right adjoint preserves filtered colimits. Therefore since $\phi: \mC \to \mD $ sends objects of $\mE $ to compact objects, the induced functor $ (\B \underset{ \phi (\A)}{\ot } (-)) \circ \Phi:  \RMod_{\A}(\mC) \to  \RMod_{\phi (\A)} (\mD) \to \RMod_{\B} (\mD) $ sends free $\A$-modules on objects of $\mE $ to compact objects.
This implies that the right adjoint $\rho$ of the functor $ (\B \underset{ \phi (\A)}{\ot } (-)) \circ \Phi$ preserves small filtered colimits and thus by stability all small colimits. 
Let $\eta $ be the unit of the adjunction 
$ (\B \underset{ \phi (\A)}{\ot } -) \circ \Phi: \RMod_{\A}(\mC) \rightleftarrows \RMod_{\B} (\mD):\rho. $ Since $\g: \A \to \gamma(\B) $ is an equivalence and $\mE$
consists of dualizable objects, Lemma \ref{qq} guarantees that $\eta_{\Sigma^{\n} \X } $ is an equivalence for every free $\A$-module $\X$ on an object of $\mE $ and $\n \in \ZZ.$ So the statement follows since $ \RMod_{\A}(\mC) $ is
generated under small colimits and arbitrary shifts by all free $\A$-modules on objects of $\mE$ and $\rho$ preserves small colimits.

\end{proof}

\vspace{1mm}

Next we deduce Theorem \ref{tre} from Propositions \ref{tr} and \ref{tt}.
For that we need an enhancement of Notation \ref{Notil}
that we prove in Lemma \ref{Enri}:

\begin{lemma}\label{Notil2}Let $1 \leq \bk \leq \infty$, let $\mC$ be a presentably $\bE_{\bk+1}$-monoidal $\infty$-category, $\mD, \mE$ be $\bE_\bk$-monoidal
$\infty$-categories left tensored over $\mC$ compatible with small colimits, $\F: \mD \to \mE$ an $\bE_\bk$-monoidal $\mC$-linear functor preserving small colimits, and $\A \to \B$ a map of $\bE_{\bk}$-algebras in $\mD$. Then

\begin{enumerate}
\item $\RMod_\A(\mD)$ refines to an $\bE_{\bk-1} $-monoidal $\infty$-category left tensored over $\mC$ compatible with small colimits.
	
\item $\RMod_\A(\mD) \to \RMod_{\F(\A)}(\mE)$ refines to an $\bE_{\bk-1}$-monoidal $\mC$-linear functor.

\item $\B \underset{\A}{\ot } (-) : \RMod_{\A}(\mD) \to  \RMod_{\B}(\mD)$ refines to an $\bE_{\bk-1}$-monoidal $\mC$-linear.
	
\end{enumerate} 
	
\end{lemma}

\vspace{1mm}

We say that a stable presentably monoidal $\infty$-category
$\mC$ is compactly generated by the tensor unit if
the tensor unit is compact and generates $\mC$ under arbitrary shifts and small colimits.

\begin{theorem}\label{tre}

Let $1 \leq \bk \leq \infty$, let $\mB$ be a small right rigid $\bE_\bk$-monoidal $\infty$-category, $\mC$ a stable presentably $\bE_{\bk+1}$-monoidal $\infty$-category compactly generated by the tensor unit, and $\mD$ a stable $\bE_\bk$-monoidal
$\infty$-category left tensored over $\mC$ compatible with small colimits whose tensor unit is compact. 
Let $\X: \mB^\op \to \mD $ be an $\bE_\bk$-monoidal functor.
Let $\X': \mB \xrightarrow{\X^\op}\mD^\op \xrightarrow{[-,\tu_\mD]_\mD}\mC$ be the composition of lax $\bE_\bk$-monoidal functors.
Then the right adjoint
\begin{equation*}\label{eq5}
\gamma: \Cell_{\X}(\mD) \to \Fun(\mB, \mC), \Z \mapsto [-,\Z]_\mD \circ \X^\op
\end{equation*}
of the $\bE_\bk$-monoidal $\mC$-linear functor $\phi$ extending $\X$ induces an $\bE_{\bk-1}$-monoidal $\mC$-linear equivalence
\begin{equation*}\label{eq6}
\Cell_{\X}(\mD)\simeq \RMod_{\tu_\mD}(\Cell_{\X}(\mD)) \to \RMod_{\X'}(\Fun(\mB, \mC)).\end{equation*} 

\vspace{1mm}

Moreover there is a canonical commutative square:
$$
\begin{xy}
\xymatrix{
\Cell_{\X}(\mD)  \ar[d]^\simeq \ar[rr]^{\X(\Z) \ot (-)}
&& \Cell_{\X}(\mD) \ar[d]^\simeq
\\
\RMod_{\X'}(\Fun(\mB, \mC)) \ar[rr]^{(\Z^* \ot (-))^*} && \RMod_{\X'}(\Fun(\mB, \mC)).
}
\end{xy} 
$$ 
\end{theorem}

\begin{proof} By Corollary \ref{univcell} the $\bE_\bk$-monoidal functor $\X:\mB^\op \to \mD$ uniquely extends to an $\bE_\bk$-monoidal left adjoint $\mC$-linear functor
$\phi: \Fun(\mB, \mC) \to \Cell_{\X}(\mD)$.
By Corollary \ref{tr} the functor $\phi$ induces an $\bE_\bk$-monoidal left adjoint $\mC$-linear functor $$(\tu_\mD \underset{\phi (\gamma(\tu_\mD))}{\ot }(-)) \circ \Phi: \RMod_{\gamma(\tu_\mD)}(\Fun(\mB, \mC)) \to \RMod_{\tu_\mD }(\Cell_{\X}(\mD)) \simeq \Cell_{\X}(\mD).$$
We apply Proposition \ref{tt} to deduce that the latter functor is an equivalence.
We verify that the assumptions of Proposition \ref{tt} are satisfied.
By Propositon \ref{univcell} the $\infty$-category $\Fun(\mB, \mC) $
is generated under arbitrary shifts and small colimits by the essential image of
the $\bE_\bk$-monoidal functor $\mK: \mB^\op \to \Fun(\mB, \mC)$.
Since $\mB$ is right rigid (so that $\mB^\op$ is left rigid), the essential image of $\mK$ consists of left dualizable objects.
Moreover every left dualizable object of $\Fun(\mB, \mC)$ is compact
since the tensor unit of $\Fun(\mB^\op, \mC) $ is compact (Recollection \ref{com}).
The tensor unit is compact since the tensor unit of $\mC$ is compact and 
the $\bE_\bk$-monoidal functor $\lambda : \mC \to \Fun(\mB^\op, \mC) $ 
of Notation \ref{Nott} left adjoint to evaluation at $\tu_\mB$ preserves compact objects as its right adjoint preserves filtered colimits.
The functor $(\tu_\mD \underset{\phi (\gamma(\tu_\mD))}{\ot }(-)) \circ \Phi$ sends every object in the essential image of $\mK$ to an object in the essential image of $\X: \mB \to \mD$.
Since $\mB$ is right rigid, the essential image of $\X: \mB^\op \to \mD$ consists of left dualizable objects, which are all compact since the tensor unit of $\mD $ is compact (Recollection \ref{com}).
Proposition \ref{Day1} identifies the $\bE_\infty$-algebra $\gamma({\tu_\mD})$
in $\Fun(\mB,\mC)$ with a lax $\bE_\bk$-monoidal functor $\mB \to \mC.$ Proposition \ref{univ} 2. gives a canonical equivalence $\gamma({\tu_\mD}) \simeq \X'$ of lax $\bE_\bk$-monoidal functors $\mB \to \mC$.
This proves the first part.

For any $\Z \in \mB$ let $\Z^*$ be a right dual of $\Z$ in $\mB$.
Then $\mK(\Z^*)$ is the left dual of $\mK(\Z)$ and $\X(\Z^*)$ is the left dual of $\X(\Z)$.
The monoidal functor $\phi: \Fun(\mB, \mC) \to \Cell_{\X}(\mD)$ sends $\mK(\Z^*)$ to $\X(\Z^*)$ and so gives rise to a commutative square
$$
\begin{xy}
\xymatrix{
\Fun(\mB, \mC) \ar[d]^\phi \ar[rr]^{\mK(\Z^*) \ot (-)}
&& \Fun(\mB, \mC) \ar[d]^\phi
\\
\Cell_{\X}(\mD) \ar[rr]^{\X(\Z^*) \ot (-)} && \Cell_{\X}(\mD)
}
\end{xy} 
$$ 
of $\infty$-categories right tensored over $\Fun(\mB, \mC),$
where we view $\Cell_{\X}(\mD)$ as right tensored over $\Fun(\mB, \mC)$ via $\phi.$ 
Passing to right adjoints we obtain the following commutative square of $\infty$-categories right tensored over $\Cell_{\X}(\mD)$, where we view $\Fun(\mB, \mC) $ as right tensored over $\Cell_{\X}(\mD) $ via $\gamma:$ 
$$
\begin{xy}
\xymatrix{
\Cell_{\X}(\mD) \ar[d]^\gamma \ar[rr]^{\X(\Z) \ot (-)}
&& \Cell_{\X}(\mD) \ar[d]^\gamma
\\
\Fun(\mB, \mC) \ar[rr]^{\mK(\Z)\ot (-)} && \Fun(\mB, \mC).
}
\end{xy} 
$$ 

The latter square gives rise to a commutative square
$$
\begin{xy}
\xymatrix{
\Cell_{\X}(\mD)  \ar[d] \ar[rr]^{\X(\Z) \ot (-)}
&& \Cell_{\X}(\mD) \ar[d]^\simeq
\\
\RMod_{\gamma(\tu_\mD)}(\Fun(\mB^\op, \mC)) \ar[rr]^{\mK(\Z)\ot (-)} && \RMod_{\gamma_{(\tu_\mD)}}(\Fun(\mB^\op, \mC)).
}
\end{xy} 
$$ 

Let $\Z^*$ be a right dual of $\Z$ in $\mB$. Then $\Z \simeq (\Z^*)^\dual$
and by Lemma \ref{ident} the $\Fun(\mB, \mC)$-linear functor 
$$\mK(\Z)\ot (-) \simeq \mK((\Z^*)^\dual)\ot (-): \Fun(\mB, \mC) \to \Fun(\mB, \mC)$$ is equivalent to the functor
$(\Z^* \ot (-))^*: \Fun(\mB, \mC) \to \Fun(\mB, \mC)$ precomposing with $\Z^* \ot (-): \mB \to \mB$.

\end{proof}

\begin{remark}\label{rom}
For every $\Z \in \mB$ the free right $\X'$-module on $\mB(\Z,-)\ot \tu_\mC : \mB \to \mC$ corresponds under the equivalence $$\Cell_{\X}(\mD) \simeq \RMod_{\X'}(\Fun(\mB, \mC)) $$ of Theorem \ref{tre} to the object $\X(\Z) \in \mD.$

\end{remark}

\begin{remark}
Let $1 \leq \bk \leq \infty$, let $\tau: \mA \to \mB$ be an $\bE_\bk$-monoidal functor of small left rigid $\bE_\bk$-monoidal $\infty$-categories, $\mC$ a stable presentably $\bE_{\bk+1}$-monoidal $\infty$-category compactly generated by the tensor unit and $\mD$ a stable $\bE_\bk$-monoidal $\infty$-category left tensored over $\mC$ compatible with small colimits whose tensor unit is compact. 
Let $\X: \mB \to \mD $ be an $\bE_\bk$-monoidal functor. Let $\Y:= \X \circ \tau: \mA \to \mD$. Then $\Cell_{\Y}(\mD) \subset \Cell_{\X}(\mD)$ and there is a commutative square of $\bE_\bk$-monoidal $\infty$-categories left tensored over $\mC$ compatible with small colimits:
$$
\begin{xy}
\xymatrix{
\Cell_{\X}(\mD) \ar[rrr]^\simeq
&&& \RMod_{\X'}(\Fun(\mB^\op, \mC)) \ar[d]
\\
\Cell_{\Y}(\mD) \ar[rrr]^\simeq \ar[u]&&& \RMod_{\Y'}(\Fun(\mA^\op, \mC)).
}
\end{xy} 
$$ 

\end{remark}

For the proof of the next lemma we use the following notation:

\begin{notation}Let $2 \leq \bk \leq \infty$ and $\mD$ an $\bE_\bk$-monoidal $\infty$-category. Let $$ \Rig(\mD) \subset \mD$$ be the full subcategory of dualizable objects.	
	
\end{notation}

\begin{lemma}\label{Du}Let $2 \leq \bk \leq \infty$ and $\mD$ a presentably $\bE_\bk$-monoidal $\infty$-category.
The lax $\bE_\bk$-monoidal functor $\hom(-,\tu_\mD): \mD^\op \to \mD$ of Remark \ref{RemA} restricts to an $\bE_\bk$-monoidal functor $\hom(-,\tu_\mD): \Rig(\mD)^\op \to \Rig(\mD)$ inverse to its opposite.
\end{lemma}

\begin{proof}

If $\X \in \Rig(\mD),$ then $\hom(\X,\tu_\mD)$ is the dual of $\X$, whose dual is $\X.$
For $\X, \Y \in \Rig(\mD)$
the structure map $\hom(\X ,\tu_\mD) \ot \hom(\Y,\tu_\mD) \to \hom(\X \ot \Y,\tu_\mD)$
identifies with the equivalence $\X^\dual \ot \Y^\dual \simeq (\X \ot \Y)^\dual.$ 

To complete the proof we show that the $\bE_\bk$-monoidal functor $$\xi: \Rig(\mD) \xrightarrow{\hom(-,\tu_\mD)} \Rig(\mD)^\op \xrightarrow{\hom(-,\tu_\mD)} \Rig(\mD) \subset \Fun(\Rig(\mD)^\op, \mS) $$ 
is adjoint to the lax $\bE_\bk$-monoidal mapping space functor 
of $\Rig(\mD).$
The $\bE_\bk$-monoidal functor $\xi$ is adjoint to the lax $\bE_\bk$-monoidal functor
$$\Rig(\mD)^\op \times \Rig(\mD) \xrightarrow{\id \times \hom(-,\tu_\mD)} \Rig(\mD)^\op \times \Rig(\mD)^\op \xrightarrow{\ot} \Rig(\mD)^\op \xrightarrow{\Rig(\mD)(-, \tu_\mD)} \mS $$
that is equivalent to the lax $\bE_\bk$-monoidal functor
$$\rho: \Rig(\mD)^\op \times \Rig(\mD) \xrightarrow{\hom(-,\tu_\mD) \times \hom(-,\tu_\mD)}\Rig(\mD) \times \Rig(\mD)^\op \xrightarrow{\Rig(\mD)^\op(-,-)} \mS.$$

By Remark \ref{laxhom2} the $\bE_\bk$-monoidal functor
$\hom(-,\tu_\mD): \Rig(\mD) \xrightarrow{} \Rig(\mD)^\op$ induces a lax $\bE_\bk$-monoidal transformation
$\theta: \Rig(\mD)(-,-) \to \rho$ that is an equivalence:
For any $\X, \Y \in \Rig(\mD)$ the component $\theta_{\X, \Y}$ is the map $$\Rig(\mD)(\X,\Y) \to \Rig(\mD)(\hom(\Y,\tu_\mD), \hom(\X ,\tu_\mD)) \simeq$$$$ \Rig(\mD)(\hom(\Y,\tu_\mD) \ot \X, \tu_\mD) \simeq \Rig(\mD)(\X, \hom(\hom(\Y,\tu_\mD),\tu_\mD))$$
induced by the unit $\Y \to \hom(\hom(\Y,\tu_\mD), \tu_\mD)$ that identifies with the equivalence to the double dual.

\end{proof}

\begin{remark}
Let the assumptions of Theorem \ref{tre} be satisfied, $2 \leq \bk \leq \infty$ 
and $\mD$ presentable.
Then the lax $\bE_\bk$-monoidal functor $\X': \mB^\op \xrightarrow{\X^\op}\mD^\op \xrightarrow{[-,\tu_\mD]_\mD}\mC$ 
can be replaced by $\mB \xrightarrow{\X} \mD \xrightarrow{[\tu_\mD,-]_\mD} \mC$
in the statement of the theorem.
\end{remark}

\begin{proof}
Since $\mD$ is presentable and $ \bk \geq 2,$ the $\bE_\bk$-monoidal structure is closed.
Let $\hom(\X,\Y)$ be the internal hom of objects $\X, \Y \in \mD$.
The dual of any dualizable object $\X \in \mD$ is equivalent to $\hom(\X, \tu)$
by its universal property.
The lax $\bE_\bk$-monoidal functor
$[-,\tu_\mD]_\mD: \mD^\op \to \mC$ factors as
$[\tu_\mD,-]_\mD \circ \hom(-,\tu_\mD): \mD^\op \to \mD \to \mC$.
The lax $\bE_\bk$-monoidal functor $\hom(-,\tu_\mD): \mD^\op \to \mD$ of Remark \ref{RemA} restricts to an $\bE_\bk$-monoidal functor $\Rig(\mD)^\op \to \Rig(\mD)$
inverse to its opposite (Lemma \ref{Du}).
Replacing $\X: \mB \to \Rig(\mD) \subset \mD$ by the $\bE_\bk$-monoidal functor
$$\Y: \mB^\op \xrightarrow{\X^\op} \Rig(\mD)^\op \xrightarrow{\hom(-,\tu_\mD)} \Rig(\mD) \subset \mD$$
in Theorem \ref{tre} we find that $\Cell_{\Y}(\mD)= \Cell_{\X}(\mD)$
and $ \Y' \simeq [\tu_\mD,-]_\mD \circ \X$
as lax $\bE_\bk$-monoidal functors.

\end{proof}	

Theorem \ref{tre} implies the following two important corollaries:

\begin{corollary}\label{Corok0}
	
Let $\mC$ be a stable presentably $\bE_2$-monoidal $\infty$-category compactly generated by the tensor unit, $\mD$ a stable monoidal $\infty$-category left tensored over $\mC$ compatible with small colimits whose tensor unit is compact,
and $\X $ a tensor invertible object of $\mD$.
The lax monoidal functor 
\begin{equation*}\label{eq5}
\gamma: \Cell_{\X}(\mD) \to \Fun(\ZZ, \mC), \Z \mapsto [-,\Z]_\mD \circ \X^{\ot (-)}
\end{equation*}
induces a $\mC$-linear equivalence
\begin{equation}\label{eq000}
\Cell_{\X}(\mD) \to \RMod_{[-,\tu_\mD]_\mD \circ \X^{\ot (-)}}(\Fun(\ZZ, \mC)).\end{equation} 

If moreover $\mC$ is symmetric monoidal, $\mD$ is a symmetric monoidal $\infty$-category left tensored over $\mC$ and
the monoidal functor $\X^{\ot(-)}:\ZZ \to \mD$ lifts to a symmetric monoidal
functor, equivalence (\ref{eq000}) refines to a symmetric monoidal equivalence.
	
\end{corollary}

For the next corollary we fix the following notation: for any $\infty$-categories $\mA,\mB$ let $\Fun^\simeq(\mA,\mB) \subset \Fun(\mA,\mB)$ be the full subcategory of functors $\mA \to \mB$ sending all morphisms to equivalences.
By Example \ref{QS} for every symmetric monoidal $\infty$-category 
$\mD$ and tensor invertible object $\X$ of $\mD$
there is a unique symmetric monoidal functor $\X^{\ot (-)-(-)}:\mJ \to \mD$
that sends $(1,0)$ to $\X.$	

\begin{corollary}\label{Corok}
	
Let $\mC$ be a stable presentably symmetric monoidal $\infty$-category compactly generated by the tensor unit, $\mD$ a stable symmetric monoidal
$\infty$-category left tensored over $\mC$ compatible with small colimits whose tensor unit is compact, and $\X $ a tensor invertible object of $\mD$.
The lax symmetric monoidal functor 
\begin{equation*}\label{eq5}
\gamma: \Cell_{\X}(\mD) \to \Fun^\simeq(\mJ^\op, \mC), \Z \mapsto [-,\Z]_\mD \circ \X^{\ot (-)-(-)}
\end{equation*}
induces a symmetric monoidal $\mC$-linear equivalence
\begin{equation*}\label{eq6}
\Cell_{\X}(\mD) \to \RMod_{[-,\tu_\mD]_\mD \circ \X^{\ot (-)-(-)}}(\Fun^\simeq(\mJ^\op, \mC)).\end{equation*} 
	
\end{corollary}

\begin{proof}
	
The symmetric monoidal functor $\mJ^\op \to \Omega^\infty(\Sigma^\infty(S^0))^\op \simeq  \Omega^\infty(\Sigma^\infty(S^0))$
yields a left adjoint symmetric monoidal functor $$\Fun(\mJ^\op, \mC) \to \Fun( \Omega^\infty(\Sigma^\infty(S^0)), \mC)$$ whose lax symmetric monoidal right adjoint is fully faithful and induces an equivalence $\Fun( \Omega^\infty(\Sigma^\infty(S^0)), \mC) \simeq \Fun^\simeq(\mJ^\op, \mC)$
(see Notation \ref{jona} and the following lines).
The lax symmetric monoidal functor $\gamma$ factors as $ \Cell_{\X}(\mD) \to \Fun( \Omega^\infty(\Sigma^\infty(S^0)), \mC), \Z \mapsto [-,\Z]_\mD \circ \X^{\ot (-)-(-)}$ followed by the resulting symmetric monoidal equivalence
$ \Fun( \Omega^\infty(\Sigma^\infty(S^0)), \mC) \simeq \Fun^\simeq(\mJ^\op, \mC).$
Thus the symmetric monoidal $\mC$-linear functor $$\Cell_{\X}(\mD) \to \RMod_{[-,\tu_\mD]_\mD \circ \X^{\ot (-)-(-)}}(\Fun^\simeq(\mJ^\op, \mC))$$ factors as
$$\Cell_{\X}(\mD) \to \RMod_{[-,\tu_\mD]_\mD \circ \X^{\ot (-)-(-)}}(\Fun( \Omega^\infty(\Sigma^\infty(S^0)), \mC)) \simeq $$$$\RMod_{[-,\tu_\mD]_\mD \circ \X^{\ot (-)-(-)}}(\Fun^\simeq(\mJ^\op, \mC)).$$
So we apply Theorem \ref{tre} to see that this functor is an equivalence.
\end{proof} 

\section{A model for cellular motivic spectra}\label{sec4}

In this section we apply Theorem \ref{tre} to motivic homotopy theory.

\vspace{1mm}
We fix the following notation:
\begin{notation}
Let $\rS$ be a Noetherian separated scheme of finite Krull dimension,
which we call a base scheme.
Let $\SH(\rS)$ be the symmetric monoidal $\infty$-category of motivic spectra over $\rS,$
i.e. the $\mathbb{P}^1$-stabilization of the $\bA^1$-localization 
of the $\infty$-category of derived Nisnevich-sheaves on the category $\Sm_\rS$ of smooth schemes over $\rS.$
\end{notation}
We refer to $\bE_\infty$-algebras in $\SH(\rS)$ as motivic $\bE_\infty$-ring
spectra over $\rS.$ 

\begin{notation}Let $\rS$ be a base schme and $\A$ a motivic $\bE_\infty$-ring spectrum over $\rS$. The symmetric monoidal $\infty$-category of cellular motivic
$\A$-module spectra is $$\Mod_\A(\SH(\rS))^{\mathrm{cell}}:=\Cell_{\A \smash S^{0,1}} (\Mod_\A(\SH(\rS)))$$ (Example \ref{cella}).
\end{notation}
Since $\SH(\rS)$ is a stable presentably symmetric monoidal $\infty$-category, there is a unique left adjoint symmetric monoidal functor $\Sp \to \SH(\rS)$ (taking the $\mathbb{P}^1$-stabilization of the $\bA^1$-localization of the constant sheaf) \cite[Corollary 4.8.2.19]{lurie.higheralgebra}, whose lax symmetric monoidal right adjoint taking global sections we call $\Gamma$.


\begin{construction}
Since $S^{\n,1}$ is tensor-invertible in $\SH(\rS)$, by Example \ref{QS} there is a unique monoidal functor $ (S^{\n,1})^{\smash(-)}: \ZZ \to \SH(\rS)$ sending $\ell$ to $S^{n \ell,\ell}$. By Remark \ref{RemA} the functor $\SH(\rS)(-,\A): \SH(\rS)^\op \xrightarrow{} \Mod_\E(\Sp)$ refines to a lax symmetric monoidal functor since $\A$ is an $\bE_\infty$-algebra.
By Recollection \ref{Day2} the composition $$\ZZ=\ZZ^\op \xrightarrow{(S^{\n,1})^{\smash(-)}} \SH(\rS)^\op \xrightarrow{\SH(\rS)(-,\A)} \Mod_\E(\Sp)$$ of lax monoidal functors corresponds to an $\bE_1$-algebra $\Lambda$ in $\Mod_\E(\Sp)^\ZZ.$
\end{construction}

\begin{theorem}\label{mot}Let $\rS$ be a base scheme, $\n \in \ZZ$ and $\A$ a motivic $\bE_\infty$-ring spectrum over $\rS$ and $\E \to \Gamma(\A)$ a map of $\bE_{\infty}$-rings.
There is a $\Mod_\E(\Sp)$-linear equivalence
\begin{equation}\label{kiki}
\chi: \Mod_\A(\SH(\rS))^{\mathrm{cell}} \simeq \Mod_{\Lambda}(\Mod_\E(\Sp)^\ZZ).
\end{equation}

For every $\M \in \Mod_\A(\SH(\rS))^{\mathrm{cell}}$ and $\ell,\m \in \ZZ$ there is a canonical equivalence
$$ \pi_{\ell,\m}(\M) \simeq \pi_{\ell-\n \m}(\chi(\M)_\m).$$
	
\end{theorem}

\begin{proof}
We apply Theorem \ref{tre} to
$\Mod_\A(\SH(\rS))$ and the monoidal functor $\A \smash (S^{\n,1})^{\smash (-)}: \ZZ \to \Mod_\A(\SH(\rS))$ and identify the $\bE_1$-algebra.
There is an equivalence of $\bE_1$-algebras in $\Mod_\E(\Sp)^\ZZ$, i.e. lax monoidal functors $\ZZ \to \Mod_\E(\Sp)$:
$$\Mod_\A(\SH(\rS))(-,\A) \circ (\A \smash -) \circ (S^{\n,1})^{\smash (-)} \simeq \SH(\rS)((S^{\n,1})^{\smash (-)}, \A)=\Lambda.$$
Theorem \ref{tre} implies that $\chi$ is an equivalence and it remains to verify that $ \pi_{\n \m + \ell,\m}(\M) \simeq \pi_{\ell}(\chi(\M)_\m)$ for any $\m,\ell \in \ZZ.$
By Remark \ref{rom} there is a canonical equivalence 
$\chi(\A \smash (S^{\n,1})^{\smash\m}) \simeq \Theta \smash_\E (\E[\m])$ and so for any $\n \in \ZZ$ a canonical equivalence 
$\chi(\A \smash S^{\n\m+\ell,\m}) \simeq  \Theta \smash_\E(\Sigma^\ell(\E)[\m])$.
Hence we obtain an equivalence $$\pi_{\n \m + \ell,\m}(\M) =\pi_0(\Mod_\A(\SH(\rS))^{\mathrm{cell}}(\A \smash S^{\n \m+\ell,\m},\M)) \simeq $$$$\pi_0(\Mod_{\Theta}(\Mod_\E(\Sp)^\ZZ)(\Theta \smash_\E (\Sigma^\ell(\E)[\m]),\chi(\M)))\simeq $$
$$\pi_0((\Mod_\E(\Sp)^\ZZ)(\Sigma^\ell(\E)[\m],\chi(\M)))\simeq
\pi_0((\Mod_\E(\Sp)^\ZZ)(\Sigma^\ell(\E),\chi(\M)[-\m])) \simeq $$
$$\pi_0(\Mod_\E(\Sp)(\Sigma^\ell(\E),\chi(\M)_\m)) \simeq \pi_\ell(\chi(\M)_\m).$$

\end{proof}

\begin{construction}Since $S^{\n,1}$ is tensor-invertible in $\SH(\rS)$, by Example \ref{QS} there is a unique symmetric monoidal functor $ (S^{\n,1})^{\smash(-)-(-)}: \mJ \to \SH(\rS)$ sending $\ell, \ell'$ to $S^{n (\ell-\ell'), \ell-\ell'}$ and sending all morphisms to equivalences.
By Recollection \ref{Day2} the composition  of lax symmetric monoidal functors$$\mJ^\op \xrightarrow{(S^{\n,1})^{\smash(-)-(-)}} \SH(\rS)^\op \xrightarrow{\SH(\rS)(-,\A)} \Mod_\E(\Sp)$$ corresponds to an $\bE_\infty$-algebra $\Xi$ in $\Fun^\simeq(\mJ^\op,\Mod_\E(\Sp)).$
\end{construction}

\begin{theorem}\label{mot2}
Let $\rS$ be a base scheme, $\n \in \ZZ$ and $\A$ a motivic $\bE_\infty$-ring spectrum over $\rS$ and $\E \to \Gamma(\A)$ a map of $\bE_{\infty}$-rings.
There is a symmetric monoidal equivalence 
$$\Mod_\A(\SH(\rS))^{\mathrm{cell}} \simeq \Mod_{\Xi}(\Fun^\simeq(\mJ,\Mod_\E(\Sp)))$$
lifting equivalence (\ref{kiki}) along restriction along the monoidal functor $\ZZ \to \mJ^{-1}\mJ.$
	
\end{theorem}

\begin{proof}

We apply Corollary \ref{Corok} to $\Mod_\A(\SH(\rS))$ and the symmetric monoidal functor $\A \smash (S^{\n,1})^{\smash(-)-(-)}: \mJ \to \Mod_\A(\SH(\rS))$
and identify the $\bE_\infty$-algebra.
There is an equivalence of $\bE_\infty$-algebras in $\Fun^\simeq(\mJ^\op,\Mod_\E(\Sp))$, i.e. lax symmetric monoidal functors $\mJ^\op \to \Mod_\E(\Sp)$:
$$\Mod_\A(\SH(\rS))(-,\A) \circ (\A \smash (-)) \circ (S^{\n,1})^{\smash(-)-(-)} \simeq$$$$ \SH(\rS)((S^{\n,1})^{\smash(-)-(-)}, \A)=\Xi.$$
\end{proof}

For the next application we fix the following terminology, an adaption
of \cite[Definition 4.1]{spitzweck.per}:

\begin{definition}\label{Period}
	
Let $\rS$ be a base scheme, $\n,\m \in \ZZ$ and $\A$ a motivic $\bE_\infty$-ring spectrum over $\rS$.
A multiplicative $(\n,\m)$-periodization of $\A$ is pair $(\X, \alpha)$,
where $\X$ is an $\bE_\infty$-algebra in $\Mod_\A(\SH(\rS))^\ZZ$ and 
$\alpha$ is an equivalence $\A \smash (S^{\n,\m})^{\smash(-)} \simeq \X$
in $\Mod_\A(\SH(\rS))^\ZZ$ such that the following conditions hold:
\begin{enumerate}
\item The morphism $\alpha_0: \A \to \X_0$ underlies the unique map of $\bE_{\infty}$-algebras.
\item For every $\bk, \ell \in \ZZ$ the following square in $\Mod_\A(\SH(\rS)) $ commutes:
$$
\begin{xy}
\xymatrix{
\A \smash (S^{n,m})^{\smash\bk} \smash_\A \A \smash (S^{n,m})^{\smash\ell}\ar[d]^\simeq \ar[rr]
&& \X_\bk \smash_\A \X_\ell \ar[d]
\\
\A \smash (S^{n,m})^{\smash(\bk+\ell)} \ar[rr] && \X_{\bk+\ell}.
}
\end{xy} 
$$ 
\end{enumerate}
\end{definition}

\begin{remark}\label{lifta}
	
Let $(\X, \alpha)$ be a multiplicative $(\n,\m)$-periodization of $\A$.
The $\bE_{\infty}$-algebra $\X$ in $\Mod_\A(\SH(\rS))^\ZZ$ corresponds to a lax symmetric monoidal functor $\X':\ZZ \to \Mod_\A(\SH(\rS)).$
Condition 2. implies that $\X'$ is symmetric monoidal since the right vertical functor in the square is an equivalence.
	
Condition 1. and 2. precisely say that $\alpha:\A \smash (S^{\n,\m})^{\smash(-)} \to \X$ is an equivalence of monoidal functors $\ZZ \to \mathrm{Ho}(\Mod_\A(\SH(\rS)))$ and so is uniquely determined by
its value at 1 by Example \ref{QS}.
So a multiplicative $(\n,\m)$-periodization of $\A$ corresponds to a symmetric monoidal functor $\Y: \ZZ \to \Mod_\A(\SH(\rS))$ and an equivalence $\Y_1 \simeq \A \smash S^{\n,\m} $ in $\Mod_\A(\SH(\rS)) $.

\end{remark}


\begin{remark}\label{ind} 
Let $\rS$ be a base scheme, $\n,\m \in \mathbb{N}$, $\A \to \B$ a map of motivic $\bE_\infty$-ring spectra over $\rS$ and $(\X,\alpha)$ a multiplicative $(\n,\m)$-periodization of $\A$.
Then $(\B \ot_\A \X,\B \ot_\A \alpha)$ is a multiplicative $(\n,\m)$-periodization of $\B.$

\end{remark}



\begin{theorem}\label{mot3}Let $\rS$ be a base scheme, $\n \in \ZZ$, and $\A$ a motivic $\bE_\infty$-ring spectrum over $\rS$ that admits a multiplicative $(\n,1)$-periodization and $\E \to \Gamma(\A)$ a map of $\bE_{\infty}$-rings.
The $\Mod_\E(\Sp)$-linear equivalence $$\Mod_\A(\SH(\rS))^{\mathrm{cell}} \simeq \Mod_{\Lambda}(\Mod_\E(\Sp)^\ZZ)$$ 
of Theorem \ref{mot} refines to a $\Mod_\E(\Sp)$-linear symmetric monoidal equivalence.		
	
\end{theorem}

\begin{proof}
By Remark \ref{lifta} a multiplicative $(\n,1)$-periodization of $\A$
gives a symmetric monoidal functor $\xi: \ZZ \to \Mod_\A(\SH(\rS))$ that sends
1 to $\A \smash S^{\n,1}$.	
By Example \ref{QS} this implies that the underlying monoidal functor of $\xi$
is the monoidal functor $\A \smash (S^{\n,1})^{\smash (-)}: \ZZ \to \Mod_\A(\SH(\rS))$. 
By Recollection \ref{Day2} the composition $$\ZZ=\ZZ^\op \xrightarrow{\xi} \Mod_\A(\SH(\rS))^\op \xrightarrow{\Mod_\A(\SH(\rS))(-,\A)} \Mod_\E(\Sp)$$ of lax symmetric monoidal functors corresponds to an $\bE_\infty$-algebra $\bar{\Lambda}$ in $\Mod_\E(\Sp)^\ZZ$ whose underlying $\bE_1$-algebra is $\Lambda.$
So the result follows from Corollary \ref{Corok0}.

\end{proof}


For the following corollary recall the motivic $\bE_\infty$-ring spectra
$\MGL_\rS$ representing algebraic cobordism, $\KGL_\rS$ representing Weibels homotopy K-theory and $\M\ZZ_\rS$ representing motivic cohomology with $\ZZ$-coefficients, where $\M\ZZ_\rS$ is only defined over a base scheme $S$ over a Dedekind domain of mixed characteristic.

\begin{corollary}\label{eqh}
	
For any base scheme $\rS$ there are symmetric monoidal equivalences
$$\Mod_{\MGL_\rS}(\SH(\rS))^{\mathrm{cell}} \simeq \Mod_{\SH(\rS)((S^{2,1})^{\smash(-)}, \MGL_\rS)}(\Sp^\ZZ). $$ 
$$\Mod_{\KGL_\rS}(\SH(\rS))^{\mathrm{cell}} \simeq \Mod_{\SH(\rS)((S^{2,1})^{\smash(-)}, \KGL_\rS)}(\Sp^\ZZ). $$

If $\rS$ is over a Dedekind domain of mixed characteristic, there is a symmetric monoidal equivalence $$\Mod_{\M\ZZ_\rS}(\SH(\rS))^{\mathrm{cell}} \simeq \Mod_{\SH(\rS)((S^{2,1})^{\smash(-)}, \M\ZZ_\rS)}(Mod_\ZZ(\Sp)^\ZZ).$$ 
	
\end{corollary}

\begin{proof}
We apply Theorem \ref{mot3} to the motivic $\bE_{\infty}$-ring spectra over $\rS:$
$$\A=\MGL_\rS, \KGL_\rS, \M\ZZ_\rS$$
and need to see that these motivic $\bE_{\infty}$-ring spectra admit a multiplicative $(2,1)$-periodization.

In \cite[Theorem 16.19]{Bachmann} a multiplicative $(2,1)$-periodization of $\MGL_\rS$ is constructed.
Recall that an $\bE_\infty$-orientation of a motivic $\bE_\infty$-ring spectrum $\A$ over $\rS$ is a map of motivic $\bE_{\infty}$-ring spectra $\MGL_\rS \to \A.$
By Remark \ref{ind} every $\bE_\infty$-orientation of 
a motivic $\bE_{\infty}$-ring spectrum $\A$ over $\rS$ gives rise to a multiplicative $(2,1)$-periodization of $\A.$ 
The motivic $\bE_{\infty}$-ring spectra $\KGL_\rS, \M\ZZ_\rS$
admit a canonical $\bE_\infty$-orientation \cite[Proposition 5.10]{Gepner},
\cite[Proposition 11.1, Remark 11.2]{Spitzweck2} and so a multiplicative $(2,1)$-periodization.

	
\end{proof}

\begin{remark}
Corollary \ref{eqh} was independently proven by \cite{spitzweck.per}
with different methods using strict models in the setting of model categories.	

\end{remark}

Next we apply Theorem \ref{mot3} to complex cellular motivic spectra.
In order to apply Theorem \ref{mot3} we prove that complex motivic $\bE_\infty$-ring spectra always admit a multiplicative $(0,\m)$-periodization for every $\m \in \ZZ$
(Proposition \ref{Corf}, Corollary \ref{Corper}).
To construct these multiplicative $(0,\m)$-periodizations we lift multiplicative periodizations along the Betti-realization, which is possible
by Lemma \ref{cel} and Proposition \ref{Proq}.
Before we start, we recall the functor of Betti-realization $ \Sm_\bC\to \Sp$ 
that sends a smooth scheme $\Y$ over $\bC$ to $ \Sigma^\infty_+(\Y(\bC))$.
This functor is symmetric monoidal, $\bA^1$-homotopy invariant, satisfies Nisnevich excision and sends the canonical map $\bG_m \to \bA^1$ to a map of spectra whose homotopy fiber $S^1$ is tensor invertible.
This guarantees by the universal property of $\SH(\bC)$
that Betti-realization $ \Sm_\bC\to \Sp$ uniquely extends to a left adjoint symmetric monoidal functor $B: \SH(\bC) \to \Sp$, which we call Betti-realization, too. 





\begin{lemma}\label{cel}

Let $\mB,\mC,\mD$ be symmetric monoidal $\infty$-categories, $\F: \mC \to \mD, \alpha: \mB \to \mD$ symmetric monoidal functors, $\phi: \mB^\simeq \to \mC^\simeq$ a map such that the following triangle commutes:
$$
\begin{xy}
\xymatrix{
\mB^\simeq \ar[rd]^{\alpha^\simeq} \ar[rr]^{\phi}
&& \mC^\simeq \ar[ld]^{\F^\simeq}
\\
& \mD^\simeq.}
\end{xy} 
$$ 
Assume that the following conditions hold:

\begin{enumerate}
\item The morphism spaces of $\mB$ are empty or contractible,
\item For every $\X, \Y\in\mB$ the induced map 
$$\mC(\phi(\X), \phi(\Y)) \to \mD(\F(\phi(\X)),\F(\phi(\Y))) $$
is an equivalence if there is a morphism $\X \to \Y$ in $\mB.$
\item For every $\X_1, ..., \X_\n\in\mB$ for $\n \geq 0$ there is an equivalence $$\lambda:\phi(\X_1) \ot ... \ot \phi(\X_\n) \simeq \phi(\X_1\ot ... \ot\X_\n)$$ in $\mC$ whose image under $\F$ is the canonical equivalence
$$\alpha(\X_1) \ot ... \ot \alpha(\X_\n) \simeq \alpha(\X_1\ot ... \ot\X_\n).$$
\end{enumerate}

Then there is a unique symmetric monoidal functor $\kappa: \mB \to \mC$
inducing $\phi$ on maximal subspaces and lifting $\alpha: \mB \to \mD$ along
$\F:\mC \to \mD,$ whose structure equivalence $\kappa(\X_1) \ot ... \ot \kappa(\X_\n) \simeq \kappa(\X_1\ot ... \ot\X_\n)$ for $\X_1,...,\X_\n \in \mB$ is $\lambda.$

\end{lemma}

\begin{proof}
	
Using $\lambda$ the induced map
$$\mC(\phi(\X_1) \ot ...\ot \phi(\X_\n), \phi(\Y)) \to \mD(\F(\phi(\X_1)) \ot...\ot \F(\phi(\X_\n)), \F(\phi(\Y))) $$
identifies with the induced map
$$\mC(\phi(\X_1 \ot ...\ot \X_\n), \phi(\Y)) \to \mD(\F(\phi(\X_1 \ot...\ot \X_\n)), \F(\phi(\Y)))$$
and so is an equivalence by 2. if there is a map $\X_1 \ot ... \ot \X_\n \to \Y$ in $\mB.$
This implies by Lemma \ref{lemo} that there is a unique lax symmetric monoidal functor $\kappa: \mB \to \mC$ inducing $\phi$ on maximal subspaces and lifting $\alpha: \mB \to \mD$ along $\F:\mC \to \mD.$

We complete the proof by showing that for every $\X_1, ..., \X_\n\in\mB$ for $\n \geq 0$ the
canonical map $\rho: \kappa(\X_1\ot ... \ot\X_\n) \xrightarrow{\lambda^{-1}}\kappa(\X_1) \ot ... \ot \kappa(\X_\n) \to \kappa(\X_1\ot ... \ot\X_\n)$ in $\mC$ is the identity.
Since $\kappa^\simeq \simeq \phi,$ by Condition 2. this is equivalent to show that 
the image under $\F$ of $\rho$ is the identity.
The map $\F(\rho)$ factors as 
$$\F(\kappa(\X_1\ot ... \ot\X_\n)) \xrightarrow{\F(\lambda)^{-1}}\F(\kappa(\X_1)) \ot ... \ot \F(\kappa(\X_\n)) \to \F(\kappa(\X_1\ot ... \ot\X_\n)),$$ which is the identity by 3.
\end{proof}


\begin{proposition}\label{Proq}
For every $\ell, \m, \n\in \mathbb{Z}$ Betti realization $B: \SH(\bC) \to \Sp$ induces an equivalence
$$\SH(\bC)(S^{\n\ell,\m\ell}, S^{\n\ell,\m\ell}) \simeq \Sp(S^{\n\ell},S^{\n\ell}).$$
\end{proposition}

\begin{proof}

The map of spectra $\SH(\bC)(S^{\n\ell,\m\ell}, S^{\n\ell,\m\ell}) \to \Sp(S^{\n\ell},S^{\n\ell})$
induced by Betti realization factors as
$$\SH(\bC)(S^{\n\ell,\m\ell}, S^{\n\ell,\m\ell}) \simeq \SH(\bC) (S^{0,0}, S^{0,0})\to \Sp(S^0,S^0) \simeq \Sp(S^{\n\ell},S^{\n\ell}).$$

Let $\alpha: \Sp \to \SH(\bC) $ be the unique left adjoint symmetric monoidal functor. 
Since $\Sp$ is the initial stable presentably symmetric monoidal $\infty$-category, the composition $B \circ \alpha: \Sp \to \SH(\bC)  \to \Sp$ 
of left adjoint symmetric monoidal functors is the identity.
Hence the induced map of spectra
$$\Sp(S^0,S^0) \to \SH(\bC) (S^{0}, S^{0}) \to \Sp(S^0,S^0)$$
is the identity.
By \cite[Theorem 1]{Levine} the functor $\alpha$ is fully faithful so that the map $\Sp(S^0,S^0) \to \SH(\bC)(S^{0}, S^{0})$ is an equivalence, and the claim follows.
\end{proof}

Definition \ref{Period} has a topological analogue, where we use Remark \ref{lifta}:

\begin{definition} Let $\n \in \ZZ$ and $\E$ an $\bE_\infty$-ring spectrum.
A multiplicative $\n$-periodization of $\E$ is a symmetric monoidal functor
$\X: \ZZ \to \Mod_\E(\Sp)$ equipped with an equivalence $\E \smash S^\n \simeq \X_1.$

\end{definition}

\begin{proposition}\label{Corf}\label{Cort}


For every $\m \in \ZZ$ there is a multiplicative $(0,\m)$-periodization of the motivic sphere spectrum over $\bC$ that is unique with the property that its Betti-realization is the trivial multiplicative $0$-periodization of the sphere spectrum.

\end{proposition}

\begin{proof}
	
By Proposition \ref{Proq} for every $\ell, \m, \n\in \mathbb{Z}$ Betti realization $B: \SH(\bC) \to \Sp$ induces an equivalence
$\SH(\bC)(S^{\n\ell,\m\ell}, S^{\n\ell,\m\ell}) \simeq \Sp(S^{\n\ell},S^{\n\ell}).$
We apply Lemma \ref{cel} to the Betti-realization $B$ and the functor $\ZZ \to \SH(\bC)^\simeq, \ell \mapsto S^{0,\m \ell}$ in order to uniquely lift the constant symmetric monoidal functor $\ZZ \to \Sp$
taking the sphere spectrum to a symmetric monoidal functor $\xi: \ZZ \to \SH(\bC)$ that sends $\ell $ to $S^{0,\m \ell}$ and whose structure morphism
$S^{0,\m \ell} \smash S^{0,\m \ell'} \to S^{0,\m (\ell+\ell')}$
for $\ell, \ell' \in \ZZ$ is the canonical equivalence.
By Remark \ref{lifta} the symmetric monoidal functor $\xi: \ZZ \to \SH(\bC)$
and the identity of $\xi(1)\simeq S^{0,m}$ is a multiplicative $(0,\m)$-periodization of the motivic sphere spectrum over $\bC$.
	
\end{proof}

Remark \ref{ind} gives the following corollary:

\begin{corollary}\label{Corper}
Let $\A$ be a motivic $\bE_{\infty}$-ring spectrum over $\bC.$
For every $\m \in \ZZ$ there is a multiplicative $(0,\m)$-periodization of $\A$:


\end{corollary}

Corollary \ref{Corper} and Theorem \ref{mot3} imply the following corollary:

\begin{corollary}\label{motcor}Let $\A$ be a motivic $\bE_\infty$-ring spectrum over $\bC$ and $\E \to \Gamma(\A)$ a map of $\bE_{\infty}$-rings.
The $\Mod_\E(\Sp)$-linear equivalence 
\begin{equation}\label{eqqp}
\Mod_\A(\SH(\bC))^{\mathrm{cell}} \simeq \Mod_{\SH(\bC)(S^{0,-}, \A)}(\Mod_\E(\Sp)^\ZZ)\end{equation}
of Theorem \ref{mot} refines to a $\Mod_\E(\Sp)$-linear symmetric monoidal equivalence.		

\end{corollary}





\appendix

\section{Appendix}

In this appendix we prove two lemmas, Lemma \ref{lemo} and Lemma \ref{Enri}.
Lemma \ref{lemo} is used to prove Lemma \ref{cel},
Lemma \ref{Enri} is used to prove Lemma \ref{Notil2}.

For the following lemma we use the notion of $\infty$-operad and $\infty$-preoperad
(also known as flagged $\infty$-operad) \cite[Definition 3.2.1, Definition 2.4.3]{CHU2020}.

\begin{lemma}\label{lemo}

Let $\mB,\mC,\mD$ be $\infty$-operads, $\F: \mC \to \mD$ 
a map of $\infty$-operads and $\phi: \mB^\simeq \to \mC^\simeq$ a map
such that the multi-morphism spaces of $\mB$ are empty or contractible
and for every $\X_1,...,\X_\n,\Y \in \mB$ the induced map 
$$\Mul_\mC(\phi(\X_1),...,\phi(\X_\n); \phi(\Y)) \to \Mul_\mD(\F(\phi(\X_1)),...,\F(\phi(\X_\n)); \F(\phi(\Y))) $$
is an equivalence if there is a multi-morphism $\X_1,...,\X_\n \to \Y$ in $\mB$.
The induced map
$$\{\phi\} \times_{\mS(\mB^\simeq,\mC^\simeq)} \Op_\infty(\mB,\mC) \to \{\F^\simeq \circ \phi\} \times_{\mS(\mB^\simeq,\mD^\simeq)} \Op_\infty(\mB,\mD) $$
is an equivalence, where the pullbacks use the functor
$ (-)^\simeq : \Op_\infty \to \mS$ assigning the space of objects.
\end{lemma}

\begin{proof}

By \cite[4.2]{articles} there is an $\infty$-category $\Pre\Op_\infty$ of $\infty$-preoperads
that contains a full subcategory $\Op_\infty$ of $\infty$-operads and comes equipped with
a cartesian fibration $(-)^\simeq: \Pre\Op_\infty \to \mS$ that takes the space of colors. We set $\psi:= \F^\simeq \circ \phi.$
Consequently, the induced map
$$\{\phi\} \times_{\mS(\mB^\simeq,\mC^\simeq)} \Op_\infty(\mB,\mC) \to \{\psi\} \times_{\mS(\mB^\simeq,\mD^\simeq)} \Op_\infty(\mB,\mD) $$
identifies with the map
\begin{equation}\label{uzo}
(\{\mB^\simeq\} \times_\mS \Op_\infty)(\mB,\phi^{*}(\mC)) \to (\{\mB^\simeq\} \times_\mS\Op_\infty)(\mB,\psi^*(\mD))
\end{equation}
induced by the map $\mC \to (\F^\simeq)^*(\mD)$ of $\infty$-preoperads
lying over the identity of $\mC^\simeq.$

By \cite[Corollary 4.2.9]{articles} for any space $\mX$ there is a monoidal structure 
on the $\infty$-category $\prod_{\n\geq0} \Fun(\mX^{\times \n+1},\mS)$,
where for $\mA,\mB \in \prod_{\n\geq0} \Fun(\mX^{\n+1},\mS)$ and $ \X_1,...,\X_\n,\Y \in \mX$ we have:
$$(\mA \ot \mB)(\X_1,...,\X_\n,\Y) \simeq \underset{\bk \geq 0, \Z\in \mX^{\times \bk}, \varphi \in \bk^\n}{\colim} \bigotimes_{1 \leq \bi \leq \bk} \mA((\X_\bj)_{\bj \in \varphi^{-1}(\bi)},\Z_\bi) \ot \mB(\Z_1,...,\Z_\bk,\Y).$$ 
Moreover the $\infty$-category of $\bE_1$-algebras
with respect to this tensor product
$\Alg(\prod_{\n\geq0} \Fun(\mX^{\times \n+1},\mS))$
is the fiber of the cartesian fibration $\Pre\Op_\infty \to \mS$ over the space $\mX$ \cite[Corollary 4.2.9]{articles}.

Let $\Xi \subset \prod_{\n\geq0} \Fun(\mX^{\times \n+1},\mS)$
be the full subcategory spanned by the obects $\mA$
such that for any $\n \geq 0$ and $\X_1,...,\X_\n,\Y \in \mX$
the space $\mA(\X_1,...,\X_\n,\Y)$ is empty whenever
there is no multi-morphism $\X_1,...,\X_\n \to \Y$ in $\mB$.

The full subcategory $\Xi \subset \prod_{\n\geq0} \Fun(\mX^{\times \n+1},\mS)$
is a colocalization, where the colocal equivalences are those maps
$\mA \to \mB$ that induce for any $\n \geq 0$ and $\X_1,...,\X_\n,\Y \in \mX$
an equivalence $$\mA(\X_1,...,\X_\n,\Y) \simeq \mB(\X_1,...,\X_\n,\Y)$$ whenever
there is a multi-morphism $\X_1,...,\X_\n \to \Y$ in $\mB$.
Indeed, for any $\mA \in \prod_{\n\geq0} \Fun(\mX^{\times \n+1},\mS)$
we construct the universal colocal equivalence $\mB \to \mA $
by setting for any $\n \geq 0$ and $\X_1,...,\X_\n,\Y \in \mX$:  $$\mB(\X_1,...,\X_\n,\Y):= \begin{cases}
\mA(\X_1,...,\X_\n,\Y), \ \Mul_\mB(\X_1,...,\X_\n, \Y) \neq \emptyset \\ \emptyset,  \hspace{23,3mm} \Mul_\mB(\X_1,...,\X_\n, \Y) = \emptyset.
\end{cases}$$

Observe that the monoidal structure on $\prod_{\n\geq0} \Fun(\mX^{\times \n+1},\mS)$ restricts to $\Xi.$
Therefore the embedding $\Xi \subset\prod_{\n\geq0} \Fun(\mX^{\times \n+1},\mS)$
is monoidal, which implies that the right adjoint $\R$ of the embedding
is lax monoidal.
Consequently, we obtain an induced colocalization on $\bE_1$-algebras
$$\Alg(\Xi) \subset \Alg(\prod_{\n\geq0} \Fun(\mX^{\times \n+1},\mS)) \simeq \{\mX\} \times_\mS\Pre\Op_\infty$$ 
whose colocal equivalences lie over the colocal equivalences of the colocalization
$\Xi \subset \prod_{\n\geq0} \Fun(\mX^{\times \n+1},\mS).$
Consequently, for $\mX=\mB^\simeq$ the map (\ref{uzo}) is an equivalence
because the map $\phi^{*}(\mC)\to \psi^*(\mD)$
is a colocal equivalence by assumption.

\end{proof}

By \cite[Corollary 4.8.1.4, Remark 4.8.1.6]{lurie.higheralgebra} the subcategory $\Cat_{\infty}^{\rc\rc} \subset \widehat{\Cat}_{\infty}$
of $\infty$-categories having small colimits and functors preserving small colimits
inherits a closed symmetric monoidal structure such that the inclusion $ \Cat^{\rc\rc}_\infty \subset \widehat{\Cat}_\infty$ is lax symmetric monoidal.


\begin{notation}
Let $\rho: \mU \to \Cat_{\infty}^{\rc\rc}$ be the cocartesian fibration
of symmetric monoidal $\infty$-categories classifying the lax symmetric monoidal inclusion $\Cat_{\infty}^{\rc\rc} \subset \widehat{\Cat}_\infty$.
	
\end{notation}

\begin{lemma}\label{Enri}
	
Let $1 \leq \bk \leq \infty$ and $\mC$ a presentably $\bE_{\bk+1}$-monoidal $\infty$-category. 
There is a canonical functor $$\Alg_{\bE_{\bk}}(\mU) \times_{\Alg_{\bE_{\bk}}(\Cat^{\rc\rc}_{\infty})} \Alg_{\bE_{\bk}}(\RMod_\mC(\Cat^{\rc\rc}_{\infty})) 
\to $$$$ \Alg_{\bE_{\bk-1}}(\RMod(\RMod_\mC(\Cat^{\rc\rc}_{\infty})))$$
over $\Alg_{\bE_{\bk}}(\RMod_\mC(\Cat^{\rc\rc}_{\infty}))$
that induces on the fiber over every $\bE_\bk$-monoidal $\infty$-category left tensored over $\mC$ compatible with small colimits a functor
$$\Alg_{\bE_{\bk}}(\mD) \to \Alg_{\bE_{\bk-1}}(\RMod_\mD(\RMod_\mC(\Cat^{\rc\rc}_{\infty})))$$
that sends an $\bE_{\bk}$-algebra $\A$ in $\mD$ to $\RMod_\A(\mD)$
seen as an $\bE_{\bk-1}$-monoidal $\infty$-category left tensored over $\mC$ compatible with small colimits equipped with a compatible left $\mD$-action.

\end{lemma}

\begin{proof}

By \cite[Construction 4.8.3.24]{lurie.higheralgebra} there is a functor $$\xi: \Alg(\mU^{\rc\rc}) \to \RMod(\Cat^{\rc\rc}_{\infty})$$ over $\Alg(\Cat^{\rc\rc}_\infty)$ that sends a pair $(\mC, \A)$ to the pair $(\mC, \RMod_\A(\mC))$
and induces on the fiber over any $\mC \in \Alg(\Cat^{\rc\rc}_\infty)$ a functor
$\Alg(\mC) \to \RMod_\mC(\Cat^{\rc\rc}_\infty).$

The $\infty$-category $\mU$ admits finite products that are preserved by $\rho$, where the final object of $\mU$ lies over the final $\infty$-category and the product of two objects $(\mC,\X), (\mD,\Y)\in \mU$ is $(\mC \times \mD, (\X,\Y)) \in \mU$.
The functor $\xi$ preserves finite products because for any 
$(\mC, \A), (\mC', \A') \in \Alg(\mU^{\rc\rc})$ the canonical functor
$\RMod_{(\A, \A')}(\mC \times \mC') \to \RMod_\A(\mC) \times \RMod_{\A'}(\mC')$ is an equivalence.
Moreover for any $(\mC_1, \A_1),..., (\mC_\n, \A_\n), (\mD,\B) \in \Alg(\mU^{\rc\rc})$ and monoidal functor $\mu: \mC_1 \times ... \times \mC_\n \to \mD$
that preserves small colimits component-wise and morphism of $\bE_1$-algebras
$ \mu(\A_1,...,\A_\n) \to \B$ in $\mD$ the induced functor
$$ \RMod_{\A_1}(\mC_1) \times ... \times \RMod_{\A_\n}(\mC_\n) \simeq \RMod_{\mu(\A_1,...,\A_\n)}(\mC_1 \times ... \times \mC_\n) \to \RMod_\mB(\mD)$$ 
preserves small colimits component-wise since colimits in modules are taken underlying.
This implies that the functor $\xi$ refines to a symmetric monoidal functor
and so induces a symmetric monoidal functor
$$ \xi': \Alg_{\bE_{\bk}}(\mU^{\rc\rc}) \simeq \Alg_{\bE_{\bk-1}}(\Alg(\mU^{\rc\rc}))
\to \Alg_{\bE_{\bk-1}}(\RMod(\Cat^{\rc\rc}_{\infty}))$$
over $\Alg_{\bE_{\bk}}(\Cat_{\infty}^{\rc\rc}).$
The $\bk$-1-monoidal functor $$\theta: \RMod(\RMod_\mC(\Cat^{\rc\rc}_\infty)) \to \Alg(\RMod_\mC(\Cat^{\rc\rc}_\infty)) \times_{\Alg(\Cat^{\rc\rc}_\infty)} \RMod(\Cat^{\rc\rc}_\infty)$$
is a map of cartesian fibrations over $\Alg(\RMod_\mC(\Cat^{\rc\rc}_\infty))$
that induces on the fiber over any monoidal $\infty$-category $\mD$ left tensored over $\mC$ compatible with small colimits the functor 
$$ \RMod_\mD(\RMod_\mC(\Cat^{\rc\rc}_\infty)) \to \RMod_\mD(\Cat^{\rc\rc}_\infty)$$
over $\Cat^{\rc\rc}_\infty$.
The latter functor is an equivalence by \cite[Corollary 4.7.3.16]{lurie.higheralgebra} since source and target are monadic over $\Cat^{\rc\rc}_\infty$ and induces on monads the equivalence
$  \mD \ot_\mC (\mC \ot (-)) \simeq \mD\ot (-).$
Hence the functor
$\Alg_{\bE_{\bk-1}}(\RMod(\RMod_\mC(\Cat^{\rc\rc}_\infty))) \to $$$ \Alg_{\bE_{\bk}}(\RMod_\mC(\Cat^{\rc\rc}_\infty)) \times_{\Alg_{\bE_{\bk}}(\Cat^{\rc\rc}_\infty)} \Alg_{\bE_{\bk-1}}(\RMod(\Cat^{\rc\rc}_\infty))$$
is an equivalence.
Since the latter functor is an equivalence, the pullback of $\xi'$ along the canonical functor $\Alg_{\bE_{\bk}}(\RMod_\mC(\Cat^{\rc\rc}_\infty)) \to \Alg_{\bE_{\bk}}(\Cat^{\rc\rc}_{\infty})$
gives the desired functor.

\end{proof}


\bibliographystyle{plain}

\bibliography{ma}

\begin{thebibliography}{10}

\bibitem{Bachmann}
Tom Bachmann and Marc Hoyois.
\newblock Norms in motivic homotopy theory.
\newblock {\em Ast\'erisque}, 425, 11 2017.

\bibitem{CHU2020}
Hongyi Chu and Rune Haugseng.
\newblock Enriched $\infty$-operads.
\newblock {\em Advances in Mathematics}, 361:106913, 2020.

\bibitem{https://doi.org/10.48550/arxiv.2006.08269}
Hongyi Chu and Rune Haugseng.
\newblock Free algebras through day convolution, 2020.

\bibitem{Gepner}
David Gepner and Victor Snaith.
\newblock On the motivic spectra representing algebraic cobordism and algebraic
 K-theory.
\newblock {\em Doc. Math.}, 14, 12 2007.

\bibitem{mmf}
Bogdan Gheorghe, Daniel~C. Isaksen, Achim Krause, and Nicolas Ricka.
\newblock $\mathbb{C}$-motivic modular forms, 2018.

\bibitem{article}
Saul Glasman.
\newblock Day convolution for infinity-categories.
\newblock {\em Mathematical Research Letters}, 23, 08 2013.

\bibitem{articles}
Rune Haugseng.
\newblock $\infty$-operads via symmetric sequences.
\newblock {\em Mathematische Zeitschrift}, 301, 05 2022.

\bibitem{Heine2023AnEB}
Hadrian Heine.
\newblock An equivalence between enriched $\infty$-categories and $\infty$-categories with weak action.
\newblock {\em Advances in Mathematics}, 417:108941, 2023.

\bibitem{Isaksen2014StableS}
Daniel Isaksen.
\newblock Stable stems.
\newblock {\em Memoirs of the American Mathematical Society}, 2014.

\bibitem{unknown}
Daniel Isaksen, Guozhen Wang, and Zhouli Xu.
\newblock More stable stems, 01 2020.

\bibitem{Levine}
Marc Levine.
\newblock A comparison of motivic and classical homotopy theories.
\newblock {\em Journal of Topology}, 7, 12 2011.

\bibitem{lurie.higheralgebra}
Jacob Lurie.
\newblock Higher {A}lgebra.
\newblock http://www.math.harvard.edu/~lurie/.

\bibitem{lurie.HTT}
Jacob Lurie.
\newblock {\em Higher topos theory}, volume 170 of {\em Annals of Mathematics
  Studies}.
\newblock Princeton University Press, Princeton, NJ, 2009.

\bibitem{miller}
H.R. Miller.
\newblock {\em Some Algebraic Aspects of the Adams-Novikov Spectral Sequence}.
\newblock Princeton University, 1975.

\bibitem{Sagave2011DiagramSA}
Steffen Sagave and Christian Schlichtkrull.
\newblock Diagram spaces and symmetric spectra.
\newblock {\em Advances in Mathematics}, 231:2116--2193, 2011.

\bibitem{spitzweck.per}
Markus Spitzweck.
\newblock Periodizable motivic ring spectra.
\newblock arXiv:0907.1510.

\bibitem{Spitzweck2}
Markus Spitzweck.
\newblock A commutative $\mathbb{P}^1$-spectrum representing motivic cohomology over dedekind domains.
\newblock {\em M\'emoires de la Soci\'et\'e math\'ematique de France}, 157, 07
  2012.

\end{thebibliography}

\end{document}